\def\trr{\triangleright}
\def\trl{\triangleleft}
\newtheorem{proposition}{Proposition}[section]
\newtheorem{lemma}[proposition]{Lemma}
\newtheorem{theorem}[proposition]{Theorem}
\theoremstyle{definition}
\newtheorem{definition}[proposition]{Definition}
\newtheorem{example}[proposition]{Example}
\newcommand{\thlabel}[1]{\label{th:#1}}
\newcommand{\selabel}[1]{\label{se:#1}}
\newcommand{\delabel}[1]{\label{de:#1}}
\newcommand{\eqlabel}[1]{\label{eq:#1}}
\newcommand{\equref}[1]{(\ref{eq:#1})}
\newcommand{\Hom}{{\rm Hom}}
\newcommand{\Cc}{\mathcal{C}}
\def\*C{{}^*\hspace*{-1pt}{\Cc}}
\def\text#1{{\rm {\rm #1}}}
\begin{document}

\title{Unified products for Malcev algebras}

\author{Tao Zhang, Ling Zhang, Ruyi Xie}

\date{}
\maketitle

\noindent

\allowdisplaybreaks

\begin{abstract}
The extending structures and unified products for Malcev algebras are developed.
Some special cases of unified products such as crossed products  and matched pair of Malcev algebras are studied.
It is proved that the extending structures can be classified by some non-abelian cohomology theory.
One dimensional flag extending structures of Malcev algebras are also investigated.
\end{abstract}

\maketitle

\footnotetext{{\it{Keyword}: Malcev algebra, extending structures, unified products, matched pair,  non-abelian cohomology}}

\footnotetext{{\it{Mathematics Subject Classification (2010)}}: 17A30, 17B99, 17B56.}

\section*{Introduction}
As a generalization of the Lie algebra, Malcev algebras (Mal'tsev algebras, Moufang-Lie algebras) were introduced by A. Maltsev in \cite{Ma} as the tangent algebras to locally smooth Moufang loops.
As an algebraic structure, the concept of Malcev algebras was studied later by by many authors, see for example \cite{Ku,Sa,Ya}.
The representation and cohomology theory of Malcev algebras was established by K. Yamaguti in \cite{Ya}.

On the other hand, extending structures for Lie algebras, associative algebras and Leibniz algebras are studied by Agore and Militaru in \cite{AM1,AM2,AM3,AM4,AM5, AM6}.
The extending structures for left-symmetric algebras, associative and Lie conformal algebras has also been studied by Y. Hong and Y. Su in  \cite{Hong1,Hong2,Hong3}.

In this paper, we study extending structures and unified products for  Malcev algebras.
We follow closely to the theory of unified product and extending structures  which were well  developed by A. L. Agore and G. Militaru in \cite{AM3,AM4,AM6}.
Let $M$ be a Malcev algebra and $E$ a vector space containing $M$ as a subspace.
We will describe and classify all Malcev algebras structures on $E$ such that $M$ is a subalgebra of
$E$. We show that associated to any extending structure of $M$ by a complement space $V$, there is a  unified product on the direct sum space  $E\cong M\oplus V$.
We will show how to classify extending structures for  Malcev algebras by using some non-abelian cohomology and deformation map  theory.

The organization of  this paper is as follows.
In the first section, we review some basic facts about Malcev algebras.
In the  second section,  the definition of  extending structures for Malcev algebras is introduced.
We give the necessary and sufficient conditions for a unified product to  form Malcev algebras.
In the last section, we study some special cases of unified products, which include crossed products of Malcev algebras and a matched pair of Malcevalgebras.
We  also study the flag extending structures. Some concrete examples are given at the end of this section.

Throughout this paper, all Malcev algebras are assumed to be over an algebraically closed field $\mathbb{F}$ of characteristic different from 2 and 3.
The space of linear maps from $V$ to $W$ is denoted by $\Hom(V,W)$.

\section{Preliminaries}
In this section, we recall some basic facts about Malcev algebras.
\begin{definition}
 Let $M$ be a linear space on the field $F$, and there is a binary linear operation $[\,,\,]: M\times M\to M$ satisfying the identity:
\begin{eqnarray}
[ x,y ] &=& - [ y,x ]\\
J\left( x,y,[ x,z ] \right) &=& [J(x,y,z), x]
\end{eqnarray}
for any $x,y,z \in M$,
where $J\left( x,y,z \right) = [ [ x,y ],z] +
[ [ y,z ], x ] + [ [ z,x ],y]$, then $\left( M,[\,,\,] \right)$ is called a Malcev algebra.
\end{definition}

A Malcev algebras is a Lie algebra when the Jacobiator  $J\left( x,y,z \right)=0$.

\begin{theorem}
Equation $(2)$ is equivalent to $(3)$.
\begin{eqnarray}
&&[ {[ {x,z} ],[ {y,w} ]} ] = [{[ {[ {x,y} ],z} ],w} ] + [ {[ {[{y,z} ],w} ], x} ]
+ [ {[ {[ {z,w}], x} ],y} ] + [ {[ {[ {w,x} ],y}],z} ]
\end{eqnarray}
\noindent for any $x,y,z,w \in M$.
\end{theorem}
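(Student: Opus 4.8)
The plan is to prove the two implications separately, exploiting that $(2)$ is quadratic in $x$ while $(3)$ is multilinear in its four arguments; passing between the two is a matter of specialization in one direction and polarization in the other.

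For the implication $(3)\Rightarrow(2)$ I would simply set $w=x$ in $(3)$. The last summand $[[[x,x],y],z]$ on the right-hand side vanishes by the anticommutativity $(1)$, and the left-hand side collapses, since $[[x,z],[y,x]]=[[x,y],[x,z]]$, again by $(1)$. What remains is the compact identity
\begin{equation*}
[[x,y],[x,z]]=[[[x,y],z],x]+[[[y,z],x],x]+[[[z,x],x],y],
\end{equation*}
and a short rearrangement using only $(1)$ shows that this is exactly $J(x,y,[x,z])=[J(x,y,z),x]$, i.e. $(2)$.

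For the converse $(2)\Rightarrow(3)$ the idea is to polarize. First I would rewrite $(2)$ in the equivalent compact form displayed above (the two are interchangeable using only $(1)$), and then replace $x$ by $x+w$. Since $(2)$ holds for every element, subtracting the instances at $x$ and at $w$ isolates the bilinear cross term and yields a multilinear identity which is symmetric in the pair $(x,w)$. Writing $D(x,y,z,w)$ for the difference of the two sides of $(3)$, a relabelling of the polarization variables followed by a direct reorganization by means of $(1)$ recasts this cross term precisely as
\begin{equation*}
D(x,y,z,w)+D(x,z,w,y)=0,\qquad\text{equivalently}\qquad D(x,y,z,w)=-D(x,z,w,y).
\end{equation*}

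The crux — and the step I expect to be the main obstacle — is that the polarized identity is not $(3)$ itself: being symmetric in $(x,w)$ it cannot coincide with the cyclic expression $(3)$, so a single linearization can never suffice. The resolution is to iterate the displayed relation. The substitution fixing $x$ and cyclically permuting $(y,z,w)\mapsto(z,w,y)$ has order three, so applying it three times returns to the original arguments; but each application flips the sign of $D$, whence $D(x,y,z,w)=(-1)^3D(x,y,z,w)=-D(x,y,z,w)$. As the base field has characteristic different from $2$, this forces $D(x,y,z,w)=0$, which is $(3)$. The only delicate points are the bookkeeping in matching the polarized cross term to $D(x,y,z,w)+D(x,z,w,y)$ and the repeated, careful use of the anticommutativity $(1)$ to bring every monomial into a common normal form.
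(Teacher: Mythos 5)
Your argument is correct, and although it shares the paper's skeleton --- $(3)\Rightarrow(2)$ by setting $w=x$, and $(2)\Rightarrow(3)$ by first rewriting $(2)$ as $[[x,y],[x,z]]=[[[x,y],z],x]+[[[y,z],x],x]+[[[z,x],x],y]$ and then linearizing in the repeated variable --- the way you finish the hard direction is genuinely different from the paper's. The paper produces two linearized identities, one with left-hand side $[[w,y],[x,z]]+[[x,y],[w,z]]$ and one with left-hand side $[[w,y],[x,z]]-[[x,y],[w,z]]$, and adds them to recover $(3)$; as printed, its first display even drops one of the six cross terms that the polarization actually yields (the term $[[[z,x],w],y]$), so the bookkeeping there is shakier than yours. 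You instead use a single polarization and identify it, after relabelling, with the relation $D(x,y,z,w)+D(x,z,w,y)=0$ for the defect $D$ of $(3)$; I checked this matching term by term and it does hold (for instance the two product terms come out as $[[x,z],[y,w]]$ and $[[y,z],[x,w]]=[[x,w],[z,y]]$, while $[[[x,y],z],w]$ and $[[[y,x],z],w]$ cancel against each other), and then the observation that the substitution $(y,z,w)\mapsto(z,w,y)$ has order three while reversing the sign of $D$ gives $D=-D$, hence $D=0$ since the characteristic is not $2$. Your route buys a cleaner, more conceptual conclusion --- a symmetry argument in place of an ad hoc addition of two displays --- at the price of the one nontrivial verification you correctly single out, namely matching the polarized cross term with $D(x,y,z,w)+D(x,z,w,y)$; that verification goes through, so the proposal is sound.
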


\begin{proof}
Substitute $J\left( {x,y,z} \right) = [
{[ {x,y} ],z} ] + [ {[ {y,z} ], x} ] +[ {[ {z,x} ],y} ]$ into $(2)$ , we get
\[
[ {[ {x,y} ],[ {x,z} ]} ] = [ {[{[ {x,y} ],z} ], x} ] + [ {[ {[ {y,z}], x} ], x} ] + [ {[ {[ {z,x} ], x}],y} ],
\]
for any $x,y,z,w \in M$.
Thus we have
\[
\begin{array}{l}
 [ {[ {w,y} ],[ {x,z} ]} ] + [ {[{x,y} ],[ {w,z} ]} ] \\
 = [ {[ {[ {w,y} ],z} ], x} ] + [{[ {[ {y,z} ], x} ],w} ] + [ {[ {[{x,y} ],z} ],w} ]
 + [ {[ {[ {y,z} ],w} ], x} ] + [{[ {[ {z,w} ], x} ],y} ],
 \end{array}
\]
\[
\begin{array}{l}
 [ {[ {w,y} ],[ {x,z} ]} ] - [ {[{x,y} ],[ {w,z} ]} ] \\
 = [ {[ {[ {y,w} ], x} ],z} ] + [{[ {[ {w,x} ],z} ],y} ] + [ {[ {[{z,w} ], x} ],y} ]
 + [ {[ {[ {w,x} ],y} ],z} ] + [{[ {[ {x,y} ],z} ],w} ].
 \end{array}
\]
Add the above two formulas to get the formula (3).

Conversely, when $w = x$ in equation (3), it is equation (2).
\end{proof}

\begin{definition}
 Let $\left( {M,\left[\,, \,\right]} \right)$ be a Malcev algebra, a left module of $M$ over a vector space $V$ is a bilinear map $\triangleright :{M}\times {V} \to {V}$ such that the following condition holds:
\begin{eqnarray}
&&[x,z] \triangleright (y \triangleright q) = [[x,y],z]\triangleright q
      - x \triangleright ([y,z] \triangleright q) + z\triangleright (y \triangleright (x \triangleright q))
\end{eqnarray}
\noindent for all $ x,y,z\in M, q \in V$.
\end{definition}

\begin{proposition}\label{prop:01}
Let $M$ be a Malcev algebra and $(V,\triangleright)$  be a left module. Then the direct sum vector space $M \oplus V$ is a Malcev algebra with bracket defined by:
\begin{equation}
[(x, u), (y, v)]= ([x,y] , x \triangleright v - y \triangleright u)
\end{equation}
for all $ x, y \in M, u, v \in V$. This is called the semi-direct product of $M$ and $V$.
\end{proposition}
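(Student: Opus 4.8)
The plan is to verify directly the two defining axioms of a Malcev algebra, equations $(1)$ and $(2)$, for the bracket on $E:=M\oplus V$. Write elements of $E$ as $X=(x,u)$, $Y=(y,v)$, $Z=(z,w)$ with $x,y,z\in M$ and $u,v,w\in V$. Antisymmetry is immediate: since $[y,x]=-[x,y]$ in $M$ and the $V$-component $x\triangleright v-y\triangleright u$ changes sign under the interchange $X\leftrightarrow Y$, we get $[Y,X]=-[X,Y]$, so $(1)$ holds.

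For the Malcev identity $(2)$, the first observation is that the $M$-component of $[X,Y]$ is just $[x,y]$, so the $M$-component of both sides of $J_E(X,Y,[X,Z])=[J_E(X,Y,Z),X]$ reduces to the Malcev identity in $M$, which holds by hypothesis. Hence it suffices to check equality of the $V$-components. To this end I would first compute the $V$-component of the Jacobiator $J_E(X,Y,Z)$, obtaining
\[
\begin{aligned}
&[x,y]\triangleright w + [y,z]\triangleright u + [z,x]\triangleright v \\
&\quad - z\triangleright(x\triangleright v) + z\triangleright(y\triangleright u) - x\triangleright(y\triangleright w) \\
&\quad + x\triangleright(z\triangleright v) - y\triangleright(z\triangleright u) + y\triangleright(x\triangleright w),
\end{aligned}
\]
and then substitute $[X,Z]=([x,z],\,x\triangleright w-z\triangleright u)$ into the third slot on the left-hand side while expanding $[J_E(X,Y,Z),X]$ on the right. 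Each side thereby becomes a sum of terms of two shapes: single actions $[a,b]\triangleright q$ and iterated actions $a\triangleright(b\triangleright q)$ and $[a,b]\triangleright(c\triangleright q)$, with $a,b,c\in M$ and $q\in\{u,v,w\}$.

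The heart of the proof is to match these two expansions, and the key tool is the module axiom $(5)$. I would sort all terms into three groups according to the innermost acted-upon vector $u$, $v$, or $w$, and within each group rewrite every term of the form $[a,b]\triangleright(c\triangleright q)$ using $(5)$; this collapses the left-hand and right-hand expansions to the same expression. Throughout, the antisymmetry of $[\,,\,]$ on $M$ is used repeatedly to align the brackets appearing in the action arguments.

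The main obstacle is purely organizational rather than conceptual: this is a long, sign-sensitive computation with many iterated-action terms, and its success hinges on a disciplined grouping so that $(5)$ is applied in exactly the right places. As an alternative that trades length for transparency, one could instead verify the symmetric four-variable identity $(3)$ on $E$, which is equivalent to $(2)$ by the preceding \textbf{Theorem}; this produces more terms but exhibits a cleaner cyclic structure that can make the bookkeeping easier to control.
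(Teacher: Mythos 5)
The paper itself omits the proof of this proposition (``by direct computations, so we omit the details''), so your strategy of a direct componentwise verification is the intended one, and your setup is sound: antisymmetry is immediate, the $M$-component of the identity reduces to the Malcev identity in $M$, and your formula for the $V$-component of the Jacobiator $J_E(X,Y,Z)$ is correct. (Minor point: the module axiom is equation $(4)$ of the paper, not $(5)$; equation $(5)$ is the bracket being defined.)

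The genuine gap is that the one step carrying all the mathematical content --- ``rewrite every term of the form $[a,b]\triangleright(c\triangleright q)$ using the module axiom; this collapses the left-hand and right-hand expansions to the same expression'' --- is asserted rather than performed, and as stated it fails. Your own suggested alternative route makes this visible in one line: checking identity $(3)$ on $M\oplus V$ with three arguments in $M$ and one, say $q$, in $V$ (the cases with two or more arguments in $V$ vanish identically, and by the cyclic symmetry of $(3)$ all placements of $q$ give the same relation) yields exactly
\begin{equation*}
[x,z]\triangleright(y\triangleright q)=[[x,y],z]\triangleright q-x\triangleright([y,z]\triangleright q)+y\triangleright(x\triangleright(z\triangleright q))-z\triangleright(y\triangleright(x\triangleright q)),
\end{equation*}
which is the necessary and sufficient condition on $\triangleright$. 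This is \emph{not} the axiom $(4)$ of Definition 1.3: the two differ by $y\triangleright(x\triangleright(z\triangleright q))-2\,z\triangleright(y\triangleright(x\triangleright q))$, and that expression is not identically zero for modules --- already the adjoint representation of $\mathfrak{sl}_2$ (which satisfies the displayed condition, being a Lie/Malcev module) violates $(4)$: with $x=y=z=h$, $q=e$ one gets $8e$ versus $16e$. So a rote substitution of $(4)$ into the iterated-action terms leaves a nonzero residue, and your claimed collapse does not occur. To complete the proof you must either (i) carry out the matching and show explicitly that the condition you need follows from $(4)$ combined over permutations of $x,y,z$ --- which the example above shows is impossible --- or (ii) recognize that $(4)$ is a garbled form of the displayed identity (it is what one obtains from $(3)$ by formally placing $q$ in the fourth slot, with one term dropped and a sign changed) and prove the proposition with the corrected axiom, for which the route through identity $(3)$ gives an essentially immediate proof.
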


\begin{theorem} \label{thm:02}
Let $M$ be a Malcev algebra, $V$ be a left $M$-module. Assume there is an anti-symmetric bilinear mapping $\omega: M\times M\to V$. Define the bracket on $M \oplus V$ by:
\begin{eqnarray}
&&[(x, u), (y, v)] = ([x,y] , \,x \triangleright v - y \triangleright u + \omega
(x,y))
\end{eqnarray}
for all $x,y \in M, u,v \in V$.
Then $\left( {M \oplus V,\left[\,, \,\right]}\right)$ is a Malcev algebra if and only if $\omega$ satisfying the following identity:
\[
\begin{array}{l}
 \omega ([x,z],[y,t]) + [t,y] \triangleright \omega (x,z) + [x,z]
\triangleright \omega (y,t) \\
 = \omega ([[x,y],z],t) + \omega ([[y,z],t], x) + \omega ([[z,t], x],y) +
\omega ([[t,x],y],z) \\
 \;\;\; + x \triangleright (t \triangleright \omega (y,z)) - x
\triangleright \omega ([y,z],t) + z \triangleright (y \triangleright \omega
(t,x)) - z \triangleright \omega ([t,x],y) \\
 \;\;\; + t \triangleright (z \triangleright \omega (x,y)) - t
\triangleright \omega ([x,y],z) + y \triangleright (x \triangleright \omega
(z,t)) - y \triangleright \omega ([z,t], x),
 \end{array}
\]
for all $ x,y,z,t \in M$. In this case, $\omega$ is called 2-cocycle on $M$.
\end{theorem}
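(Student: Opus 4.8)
The plan is to verify the two defining axioms of a Malcev algebra for the bracket $(6)$ on $E = M \oplus V$, namely anti-symmetry $(1)$ and the Malcev identity, the latter taken in the equivalent four-variable form $(3)$ established in the previous Theorem. I would first dispose of anti-symmetry, which holds unconditionally: since $[x,y]=-[y,x]$ in $M$ and $\omega$ is anti-symmetric, a direct check gives $[(x,u),(y,v)] = -[(y,v),(x,u)]$, so axiom $(1)$ imposes no constraint on $\omega$. All the content therefore lies in identity $(3)$ applied to four arbitrary elements $(x,u),(y,v),(z,w),(t,s)$ of $E$.

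The key structural observation is that both sides of $(3)$ decompose by components. The $M$-component of $(3)$ reads exactly as identity $(3)$ for $x,y,z,t$ in $M$, which holds because $M$ is a Malcev algebra. For the $V$-component, note that the $V$-part of the bracket $(6)$ is the sum of the semi-direct product term $x\triangleright v - y\triangleright u$ and the cocycle term $\omega(x,y)$; since the module action is linear and the first components of all nested brackets are computed purely in $M$, expanding the $V$-component of each side of $(3)$ splits linearly into a ``module-only'' part, identical to the one produced by the bracket $(5)$, plus a collection of terms each carrying exactly one factor of $\omega$ with at most two actions applied in front. No term involves two factors of $\omega$, so the resulting condition is linear in $\omega$.

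Next I would invoke Proposition~\ref{prop:01}: the semi-direct product $(5)$ (the case $\omega = 0$) is a Malcev algebra, hence satisfies $(3)$, so its module-only $V$-component contributions to the two sides of $(3)$ agree for all $u,v,w,s$. Consequently, $(3)$ holds for the bracket $(6)$ if and only if the $\omega$-terms on the two sides coincide; and since these $\omega$-terms do not involve $u,v,w,s$ at all, this is an identity purely in $x,y,z,t \in M$. This settles both implications at once: the ``if'' direction follows by reassembling the components, while for ``only if'' it suffices to restrict to elements with $u=v=w=s=0$, which annihilates every module-only term and forces the $\omega$-identity.

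It remains to read off the $\omega$-terms, which is the only genuinely computational step and where careful bookkeeping will be the main obstacle. Expanding the left-hand double bracket $[[(x,u),(z,w)],[(y,v),(t,s)]]$ collects the $\omega$-terms $\omega([x,z],[y,t]) + [x,z]\triangleright\omega(y,t) - [y,t]\triangleright\omega(x,z)$, and rewriting $-[y,t]\triangleright(\,\cdot\,) = [t,y]\triangleright(\,\cdot\,)$ reproduces the left-hand side of the asserted condition. On the right, a direct computation shows that a triple bracket of the generic shape $[[[(p,\cdot),(q,\cdot)],(r,\cdot)],(s,\cdot)]$ contributes the $\omega$-terms $s\triangleright(r\triangleright\omega(p,q)) - s\triangleright\omega([p,q],r) + \omega([[p,q],r],s)$; summing the four cyclic specializations $(p,q,r,s) = (x,y,z,t),\,(y,z,t,x),\,(z,t,x,y),\,(t,x,y,z)$ yields precisely the right-hand side of the stated 2-cocycle identity. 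Equating the two sides then completes the argument.
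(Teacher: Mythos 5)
Your proposal is correct. Note that the paper itself gives no argument here: it states that Proposition~1.4 and Theorem~1.5 are ``by direct computations'' and omits all details, so there is nothing to compare against line by line. Your write-up is exactly the direct computation the authors have in mind, but organized more efficiently: checking anti-symmetry, passing to the linearized four-variable form $(3)$ (legitimate by Theorem~1.2, since the bracket $(6)$ is bilinear and anti-symmetric and the characteristic is not $2$), observing that the $M$-component of $(3)$ is the Malcev identity in $M$, and then splitting the $V$-component into module-only terms plus terms linear in $\omega$. Invoking Proposition~1.4 to cancel the module-only contributions is the key simplification --- it spares you from expanding the semi-direct-product part at all and immediately shows the condition is independent of $u,v,w,s$, which also gives the ``only if'' direction by setting those to zero. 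I verified your bookkeeping: the left-hand double bracket contributes $\omega([x,z],[y,t]) + [x,z]\triangleright\omega(y,t) - [y,t]\triangleright\omega(x,z)$, and the generic triple bracket $[[[P,Q],R],S]$ contributes $s\triangleright(r\triangleright\omega(p,q)) - s\triangleright\omega([p,q],r) + \omega([[p,q],r],s)$, whose four cyclic specializations sum to the stated right-hand side. The one point worth stating explicitly in a final version is that Proposition~1.4 (the semi-direct product is Malcev) is itself unproved in the paper, so your argument inherits that dependence; this is acceptable since it is a prior result of the paper, but a fully self-contained proof would have to verify it.
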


The proof of the above Proposition \ref{prop:01} and Theorem \ref{thm:02} are by direct computations, so we omit the details.

\begin{definition} \delabel{echivextedn}
Let ${M} $ be a Malcev algebra, $E$  be a Malcev algebra such that
${M} $ is a subalgebra of $E$ and $V$ a complement of
${M} $ in $E$. For a linear map $\varphi: E \to E$ we
consider the diagram:
\begin{eqnarray} \eqlabel{diagrama1}
\xymatrix {& {M}  \ar[r]^{i} \ar[d]_{id} & {E}
\ar[r]^{\pi} \ar[d]^{\varphi} & V \ar[d]^{id}\\
& {M}  \ar[r]^{i} & {E}\ar[r]^{\pi } & V}
\end{eqnarray}
where $\pi : E \to V$ is the canonical projection of $E =
{M}  \oplus V$ on $V$ and $i: {M}  \to E$ is the
inclusion map. We say that $\varphi: E \to E$ \emph{stabilizes}
${M} $ if the left square of the diagram \equref{diagrama1} is
commutative, and $\varphi: E \to E$ \emph{stabilizes}
$V$ if the right square of the diagram \equref{diagrama1} is
commutative.

Let $(E,\cdot)$ and $(E,\cdot')$ be two Malcev algebra structures
on $E$ both containing ${M} $ as a subalgebra. $(E,\cdot)$ and $(E,\cdot')$ are called \emph{equivalent}, and we
denote this by $(E, \cdot) \equiv (E, \cdot')$, if
there exists a Malcev algebra isomorphism $\varphi: (E, \cdot)
\to (E, \cdot')$ which stabilizes ${M} $. Denote by $Extd(E,{M} )$ the set of equivalent classes of ${M} $ through $V$.

 $(E,\cdot)$ and $(E,\cdot')$ are called \emph{cohomologous},
and we denote this by $(E,\cdot) \approx (E, \cdot')$, if there exists a Malcev algebra isomorphism
$\varphi: (E, \cdot) \to (E,\cdot')$ which stabilizes ${M} $ and co-stabilizes $V$.
Denote by $Extd'(E,{M} )$  the set of  cohomologous classes of ${M} $ through $V$.
\end{definition}

\section{Unified products for Malcev algebras}

\begin{definition}\delabel{exdatum}
Let $M$ be a Malcev algebra and $V$ a vector space. An
\textit{extending datum of $M$ through $V$} is a system
$\Omega(M, V) = \bigl(\trl, \, \trr,\, \, \omega, \, [\,,\,]  \bigl)$
consisting of two bilinear maps:
\begin{eqnarray*}
\triangleright:M\times V \to V, \quad\triangleleft :M\times V \to M,
\end{eqnarray*}
and two skew-symmetric bilinear maps:
\begin{eqnarray*}
[,\,] :V\times V \to V,\quad \omega: V\times V \to M.
\end{eqnarray*}
Let $\Omega(M, V) = \bigl(\trl, \, \trr,
\, \, \omega, \, [\,,\,]  \bigl)$ be an extending datum. We denote by $M\, \natural V$ the direct sum vector space $M\oplus V$ together
with the  bracket $[\cdot,\cdot]: (M\oplus  V) \times (M\oplus V) \to M\oplus  V$ defined by:
\begin{eqnarray}
[(x, u), (y, v)] = \Big([x,y] + x\triangleleft v - y \triangleleft u + \omega (u,v), x \triangleright v - y \triangleright u+ [u,v] \Big),
\end{eqnarray}
for all $x,y,z,t \in M, u,v,p, q \in V$.
The object
$M \natural V$ is called the \textit{unified product} of $M$ and $V$ if it is a Malcev algebra with the above bracket.
%In this case the extending datum $\Omega(M, V) =\bigl(\trl, \, \trr, \, \, \omega, \, [\,,\,]  \bigl)$ is called a \textit{extending structure} of $M$ through $V$.
\end{definition}

Then the following theorem provides the set of axioms that need to be
fulfilled by an extending datum $\Omega(M, V)$ such
that $M \natural V$ is a unified product.

\begin{theorem}\thlabel{1}
Let $(M,[\,,\,])$ be a Malcev algebra, $V$ a vector space and
$\Omega(M, V)$ an extending datum of $M$ by
$V$. Then $M \natural V$ is a unified product if and
only if the following compatibility conditions hold for all
$x,y,z,t \in M, u,v,p, q \in V$:
\begin{enumerate}
\item[(U1)]$
 [ {[ {x,z} ],y \trl q} ] + [ {x,z}] \trl \left( {y \trr q} \right) \\
{ = }[ {[ {x,y} ],z} ] \trl q + [{[ {y,z} ] \trl q,x} ] - x \trl \left({[ {y,z} ] \trr q} \right)
 + [ {[ {z \trl q,x} ],y} ] - [ {x\trl \left( {z \trr q} \right),y} ] \\
+ y \left( {x \trl \left( {z \trr q} \right)}\right) - [ {[ {x \trl q,y} ],z} ] + [ {y\trl \left( {x \trr q} \right),z} ] - z\trl \left( {y \trl \left( {x \trr q} \right)}
\right) $,

\item[(U2)]
$
 [ {[ {x,z} ],\omega (v,q)} ] + [ {x,z} ]\trl [ {v,q} ] \\
{ = }[ {x \trl v,z} ] \trl q - \left( {z\trl \left( {x \trr v} \right)} \right) \trl q -\omega \left( {z \trl \left( {x \trr v} \right),q}\right) - [ {\left( {z \trl v} \right) \trl q,x} ]\\
 -[ {\omega \left( {z \trr v,q} \right), x} ]+ x\trl [ {z \trr v,q} ] + x \trl \left( {\left( {z \trl v} \right) \trl q} \right) + [ {z \trl q,x} ] \trl v \\
- \left({x \trl \left( {z \trr q} \right)} \right) \trl v- \omega \left( {x \trr \left( {z \trr q} \right),v}\right)- [ {\left( {x \trl q} \right) \trl v,z}] - [ {\omega \left( {x \trr q,v} \right),z} ]\\
 + z \trl [ {x \trr q,v} ] + z \trl\left( {\left( {x \trl q} \right) \trl v} \right)
 $,

\item[(U3)]
$
 [ {x \trl p,y \trl q} ] + \left( {x\trl p} \right) \trl \left( {y \trr q} \right) -\left( {y \trl q} \right) \trl \left( {x \trr p}
\right) + \omega \left( {x \trr p,y \trr q} \right) \\
{ = }\left( {[ {x,y} ] \trl p} \right)\trl q + \omega \left( {[ {x,y} ] \trr p, q}\right) + [ {\left( {y \trl p} \right) \trl q,x}] + [ {\omega \left( {y \trr p, q} \right), x} ] \\
 - x \trl [ {y \trr p, q} ] - x \trl\left( {\left( {y \trl p} \right) \trr q} \right) +[ {[ {\omega \left( {p, q} \right), x} ],y} ] - [{x \trl [ {p, q} ],y} ] \\
 + y \trl \left( {x \trl [ {p, q} ]} \right) -[ {x \trl q,y} ] \trl p + \left( {y\trl \left( {x \trr q} \right)} \right) \trl p +\omega \left( {y \trl \left( {x \trr q} \right),p}
\right)
$,

\item[(U4)]
$
 [ {x \trl p,\omega (v,q)} ] + \left( {x \trl p} \right) \trl [ {v,q} ] - \omega (v,q) \trl \left( {x \trr p} \right) + \omega \left( {x \trr p,[ {v,q} ]} \right) \\
{ = }\left( {\left( {x \trl v} \right) \trl p} \right) \trl q + \omega \left( {x \trr v,p} \right) \trl q + \omega \left( {[ {x \trr v,p} ], q}\right) + \omega \left( {\left( {x \trl v} \right) \trr p, q} \right) \\
 + [ {\omega \left( {v,p} \right) \trl q,x} ] + [{\omega \left( {[ {v,p} ], q} \right), x} ] - x \trl [ {[ {v,p} ], q} ] - x \trl \left( {\omega \left( {v,p} \right) \trl q} \right) \\
 + [ {\omega \left( {p, q} \right), x} ] \trl v - \left({x \trl [ {p, q} ]} \right) \trl v - \omega\left( {x \trr [ {p, q} ], v} \right) - \left( {\left( {x\trl q} \right) \trl v} \right) \trl p \\
 - \omega \left( {x \trr q,v} \right) \trl p - \omega\left( {[ {x \trr q,v} ],p} \right) - \omega \left({\left( {x \trl q} \right) \trr v,p} \right)
 $,

\item[(U5)]
$
 [ {\omega \left( {u,p} \right),y \trl q} ] + \omega\left( {u,p} \right) \trl \left( {y \trr q} \right) - \left( {y \trl q} \right) \trl [ {u,p} ] +\omega \left( {[ {u,p} ],y \trr q} \right) \\
{ = } - \left( {\left( {y \trl u} \right) \trl p}\right) \trl q - \omega \left( {y \trr u,p} \right)\trl q - \omega \left( {[ {y \trr u,p} ], q}\right) \\
 - \omega \left( {\left( {y \trl u} \right) \trl p, q}\right) + \left( {\left( {y \trl p} \right) \trl q}\right) \trl u + \omega \left( {y \trr p, q} \right)\trl u \\
 + \omega \left( {[ {y \trr p, q} ], u} \right) + \omega\left( {\left( {y \trl p} \right) \trr q, u} \right) +[ {\omega \left( {p, q} \right) \trl u,y} ] \\
 + [ {\omega \left( {[ {p, q} ], u} \right),y} ] - y\trl [ {[ {p, q} ], u} ] - y \trl\left( {\omega \left( {p, q} \right) \trl u} \right) \\
 + [ {\omega\left({q, u} \right), y} ] \trl p - \left({y \trl [ {q, u} ]} \right) \trl p - \omega\left( {y \trr [ {q, u} ], p} \right)
 $,

\item[(U6)]
$
 [ {\omega \left( {u,p} \right),\omega (v,q)} ] + \omega \left({u,p} \right) \trl [ {v,q} ] - \omega (v,q) \trl [ {u,p} ] + \omega \left( {[{u, p}], [ {v, q} ]} \right) \\
{ = }\left( {\omega \left( {u,v} \right) \trl p} \right)\trl q + \omega \left( {[ {u,v} ],p} \right)\trl q + \omega \left( {[ {[ {u,v} ],p} ], q}\right) \\
 + \omega \left( {\omega \left( {u,v} \right) \trr p, q} \right) +\left( {\omega \left( {v,p} \right) \trl q} \right) \trl u+ \omega \left( {[ {v,p} ], q} \right) \trl u \\
 + \omega \left( {[ {[ {v,p} ], q} ], u} \right) +\omega \left( {\omega \left( {v,p} \right) \trr q, u} \right) +\left( {\omega \left( {p, q} \right) \trl u} \right) \trl v\\
 + \omega \left( {[ {p, q} ], u} \right) \trl v + \omega\left( {[ {[ {p, q} ], u} ], v} \right) + \omega \left({\omega \left( {p, q} \right) \trr u,v} \right) \\
 + \left( {\omega \left( {q, u} \right) \trl v} \right)\trl p + \omega \left( {[ {q, u} ], v} \right)\trl p + \omega \left( {[ {[ {q, u} ], v} ],p}\right) \\
 + \omega \left( {\omega \left( {q, u} \right) \trr v,p} \right)
 $,

\item[(U7)]
$
 [ {x,z} ] \trr [ {v,q} ] \\
{ = } - [ {z \trl \left( {x \trr v} \right),q}
] + [ {x \trl v,z} ] \trr q - \left( {z
\trl \left( {x \trr v} \right)} \right) \trr q
 + x \trr [ {z \trr v,q} ] \\
 + x \trr\left( {\left( {z \trl v} \right) \trl q} \right) - [
{x \trr \left( {z \trr q} \right),v} ]
 + [ {z \trl q,x} ] \trr v - \left( {x
\trl \left( {z \trr q} \right)} \right) \trr v\\
+ z \trr [ {x \trr q,v} ]
 + z \trr \left( {\left( {x \trl q} \right) \trl
v} \right) $,

\item[(U8)]
$
  [ {y,t} ] \trr \left( {x \trr p}\right)\\
 =  t \trr \left( {[ {x,y} ]
\trr p} \right) - x \trr \left( {t \trl \left(
{y \trr p} \right)} \right)
+ y \trr \left( {x \trl \left( {t \trr p}
\right)} \right) - [ {[ {t,x} ],y} ] \trr p
$

\item[(U9)]
$
 [ {x \trr p,[ {v,q} ]} ] + \left( {x\trl p} \right) \trr [ {v,q} ] - \omega (v,q)\trr \left( {x \trr p} \right) \\
{ = }[ {[ {x \trr v,p} ], q} ] + [{\left( {x \trl v} \right) \trr p, q} ] + \left({\left( {x \trl v} \right) \trl p} \right) \trr q + \omega \left( {x \trr v,p} \right) \trr q\\
  - x\trr [ {[ {v,p} ], q} ] - x \trr\left( {\omega \left( {v,p} \right) \trr q} \right) - [ {x \trr [ {p, q} ], v} ] + [ {\omega\left( {p, q} \right), x} ] \trr v\\
 - \left( {x \trl [ {p, q} ]} \right) \trr v - [ {[ {x \trr q,v} ],p} ] - [ {\left({x \trl q} \right) \trr v,p} ] - \left( {\left( {x\trl q} \right) \trr v} \right) \trr p \\
 - \omega \left( {x \trr q,v} \right) \trr p $,

\item[(U10)]
$
 [ {x \trr p,y \trr q} ] + \left( {x\trl p} \right) \trr \left( {y \trr q} \right)- \left( {y \trl q} \right) \trr \left( {x \trr p} \right) \\
{ = }[ {[ {x,y} ] \trr p, q} ] + \left( {[ {x,y} ] \trl p} \right) \trr q - x \trr [ {y \trr p, q} ] - x \trr \left( {\left( {y \trl p} \right)\trr q} \right) \\
+ y \trr \left( {x \trr [{p, q} ]} \right) + [ {y \trr \left( {x \trr q}\right),p} ] - [ {x \trl q,y} ] \trr p + \left( {y\trl \left( {x \trr q} \right)} \right) \trr p
$,

\item[(U11)]
$
 [ {[ {u,p} ],[ {v,q} ]} ] + \omega \left({u,p} \right) \trr [ {v,q} ] - \omega (v,q)\trr [ {u,p} ] \\
{ = }[ {[ {[ {u,v} ],p} ], q} ] +[ {\omega \left( {u,v} \right) \trr p, q} ] + \left({\omega \left( {u,v} \right) \trl p} \right) \trr q +\omega \left( {[ {u,v} ],p} \right) \trr q \\
 + [ {[ {[ {v,p} ], q} ], u} ] + [{\omega \left( {v,p} \right) \trr q, u} ] + \left( {\omega\left( {v,p} \right) \trl q} \right) \trr u + \omega\left( {[ {v,p} ], q} \right) \trr u \\
 + [ {[ {[ {p, q} ], u} ], v} ] + [{\omega \left( {p, q} \right) \trr u,v} ] + \left( {\omega
\left( {p, q} \right) \trl u} \right) \trr v + \omega\left( {[ {p, q} ], u} \right) \trr v \\
 + [ {[ {[ {q, u} ], v} ],p} ] + [{\omega \left( {q, u} \right) \trr v,p} ] + \left( {\omega
\left( {q, u} \right) \trl v} \right) \trr p + \omega\left( {[ {q, u} ], v} \right) \trr p $.
\end{enumerate}
\end{theorem}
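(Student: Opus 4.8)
The plan is to verify directly that the bracket of \deref{exdatum} makes $M\,\natural\,V$ a Malcev algebra, i.e. that it is skew-symmetric and satisfies the Malcev identity, and then to read off the eleven axioms as the component-wise obstructions to the identity. I would first dispose of skew-symmetry, which is immediate: under the interchange of $(x,u)$ and $(y,v)$ the $M$-component $[x,y]+x\trl v-y\trl u+\omega(u,v)$ and the $V$-component $x\trr v-y\trr u+[u,v]$ each change sign, since the brackets on $M$ and on $V$ and the map $\omega$ are skew-symmetric while the $\trl$- and $\trr$-terms are manifestly antisymmetrized. This must be settled at the outset because the equivalence of identities $(2)$ and $(3)$ proved above rests on anticommutativity; once $M\,\natural\,V$ is known to be anticommutative, it is a Malcev algebra exactly when it satisfies the four-variable identity $(3)$.

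The substance of the argument is the verification of $(3)$ on $M\,\natural\,V$. The decisive structural point is that $(3)$ is multilinear in its four entries, whereas the defining identity $(2)$ is not, the entry $x$ occurring repeatedly; this is precisely why I would trade $(2)$ for $(3)$, invoking the equivalence established above. By multilinearity it suffices to evaluate $(3)$ on tuples all of whose entries are homogeneous, each being either of the form $(x,0)$ with $x\in M$ or of the form $(0,u)$ with $u\in V$. I would then organize the computation by the number $k\in\{0,1,2,3,4\}$ of entries taken from $V$, expand both sides of $(3)$ using the bracket of \deref{exdatum}, and split the resulting equality into its $M$-component and its $V$-component.

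The case $k=0$ collapses to the Malcev identity $(3)$ for $M$ itself and contributes nothing new, while the remaining cases yield, after projection, exactly the families (U1)--(U11): schematically $k=1$ produces (U1) in $M$ and (U8) in $V$; $k=2$ produces (U2) and (U3) in $M$ and (U7) and (U10) in $V$ (the two $k=2$ types corresponding to placing both $V$-entries in a single inner bracket of $[[\cdot,\cdot],[\cdot,\cdot]]$ versus splitting them between the two inner brackets); $k=3$ produces (U4) and (U5) in $M$ and (U9) in $V$; and $k=4$ produces (U6) in $M$ and (U11) in $V$. Conversely, assuming (U1)--(U11), the same homogeneous expansions show that $(3)$ holds on every homogeneous tuple, hence on all of $M\,\natural\,V$ by multilinearity, completing the ``if'' direction.

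The genuine obstacle is organizational rather than conceptual: each evaluation of $(3)$ unfolds into a large number of summands built from the four operations $\trl,\trr,\omega,[\,,\,]$, and the difficulty lies in tracking signs and the $M$- versus $V$-valued nature of every term while collecting them into the stated identities. To keep this bounded I would exploit the invariance of $(3)$ under the cyclic permutation $(x,y,z,w)\mapsto(y,z,w,x)$, which leaves both sides unchanged once skew-symmetry of the outer bracket is used; this cuts down the inequivalent placements of the $V$-entries that must be expanded by hand and exhibits several a priori distinct equations (for instance the two members of the $k=3$ family in $M$) as relabelings of one another. With this symmetry in place the verification reduces to a lengthy but routine bookkeeping, which is why the authors can reasonably present it as a direct computation.
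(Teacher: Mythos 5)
Your proposal is correct and is exactly the direct computation the paper has in mind: the paper states this theorem without any written proof, and your reduction --- skew-symmetry first, then the multilinear four-variable identity $(3)$ (justified by the paper's Theorem 1.2) checked on homogeneous tuples and split into $M$- and $V$-components, with the case-by-case correspondence $k=1\mapsto$ (U1),(U8); $k=2\mapsto$ (U2),(U3),(U7),(U10); $k=3\mapsto$ (U4),(U5),(U9); $k=4\mapsto$ (U6),(U11) --- is the standard way to carry it out and checks out against the stated axioms. The cyclic-symmetry bookkeeping you add is a genuine economy (and correctly flags that, e.g., (U4) and (U5) are relabelings of one another) but does not change the substance.
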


Given an extending structure $\Omega({M},V)$, then ${M}$ can be seen a Malcev subalgebra of ${M}\natural V$.
On the contrary, we now prove that any Malcev algebra structure on a vector space $E$ containing ${M}$ as a
subalgebra is isomorphic to a unified product.

\begin{theorem}\thlabel{classif}
Let ${M}$ be a Malcev algebra, $E$ a vector space containing
${M}$ as a subspace and $(E, [\,, \, ])$ a Malcev algebra
structure on $E$ such that ${M}$ is a Lie subalgebra. Then there exists a Malcev extending structure
$\Omega({M}, V) = \bigl(\triangleleft, \, \triangleright,
\,\omega, [\,, \, ] \bigl)$ of ${M}$ trough a subspace $V$
of $E$ and an isomorphism of Malcev algebras $(E, [\,, \, ]) \cong
{M} \,\natural \, V$ that stabilizes ${M}$ and
co-stabilizes $V$.
\end{theorem}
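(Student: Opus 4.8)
The plan is to reverse-engineer the extending datum directly from the bracket of $E$ by splitting it according to the decomposition $E = M \oplus V$, and then to invoke \thref{1} to obtain the compatibility conditions for free rather than verifying them by hand. First I would fix a linear complement $V$ of $M$ in $E$, so that every $e \in E$ decomposes uniquely as $e = p(e) + \pi(e)$ with $p(e) \in M$ and $\pi(e) \in V$, where $\pi$ is the canonical projection of \deref{echivextedn} and $p$ is the complementary projection onto $M$. Using the bracket $[\,,\,]$ of $E$, I define the four maps of an extending datum by taking components: for $x \in M$ and $u, v \in V$ set
\[
x \triangleleft v := p([x,v]), \quad x \triangleright v := \pi([x,v]), \quad \omega(u,v) := p([u,v]), \quad [u,v] := \pi([u,v]).
\]
These are bilinear because $[\,,\,]$ is, and both $\omega$ and the induced bracket on $V$ are skew-symmetric because $[\,,\,]$ is and $p, \pi$ are linear. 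This produces a candidate extending datum $\Omega(M,V) = (\triangleleft, \triangleright, \omega, [\,,\,])$.

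Next I would introduce the linear map $\varphi: E \to M \oplus V$, $\varphi(e) = (p(e), \pi(e))$, which is a linear isomorphism with inverse $(m,v) \mapsto m + v$. Transporting the bracket of $E$ along $\varphi$ and expanding $[x + u, y + v]$ bilinearly into its four pieces $[x,y]$, $[x,v]$, $[u,y]$, $[u,v]$, I would use that $M$ is a subalgebra (so $[x,y] \in M$, contributing nothing to the $V$-component), together with the definitions above and the skew-symmetry $[u,y] = -[y,u]$. Collecting the $M$- and $V$-components then yields exactly
\[
\varphi([x+u, y+v]) = \big([x,y] + x \triangleleft v - y \triangleleft u + \omega(u,v),\; x \triangleright v - y \triangleright u + [u,v]\big),
\]
which is precisely the unified product bracket of \deref{exdatum}. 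Hence $\varphi$ is an isomorphism of $(E, [\,,\,])$ onto $M \oplus V$ equipped with the unified product bracket of $\Omega(M,V)$.

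Because $(E, [\,,\,])$ is a Malcev algebra and $\varphi$ is a linear isomorphism, the transported structure makes $M \oplus V = M \natural V$ a Malcev algebra; by \thref{1} this forces $\Omega(M,V)$ to satisfy all of (U1)--(U11), so it is a genuine Malcev extending structure and $M \natural V$ is a unified product. It then remains only to observe that $\varphi$ stabilizes $M$ and co-stabilizes $V$: for $x \in M$ one has $\varphi(x) = (x,0)$, so $\varphi \circ i = i$, and by construction the $V$-component of $\varphi(e)$ is $\pi(e)$, so $\pi \circ \varphi = \pi$. This yields the required isomorphism stabilizing $M$ and co-stabilizing $V$.

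The main obstacle is conceptual rather than computational: the temptation is to verify (U1)--(U11) by hand from the Malcev identity on $E$, which is exactly the long projection that \thref{1} already encodes. The cleaner route above sidesteps this entirely by transporting the bracket and appealing to \thref{1}, so the only genuine work is the bookkeeping that the four component maps reassemble into the unified product bracket. The one place where care is truly needed is invoking the subalgebra hypothesis to guarantee that the pure-$M$ bracket $[x,y]$ has no $V$-component, and keeping track of the signs produced by skew-symmetry in the mixed terms.
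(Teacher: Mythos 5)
Your proposal is correct and follows essentially the same route as the paper: choose a projection $p$ onto $M$ with complement $V=\ker(p)$, define $\triangleleft,\triangleright,\omega,[\,,\,]$ as the $M$- and $V$-components of the mixed brackets, transport the bracket of $E$ along the obvious linear isomorphism to recover the unified product formula, and let \thref{1} supply the axioms (U1)--(U11) automatically. The only cosmetic difference is that your $\varphi$ runs from $E$ to $M\natural V$ while the paper's runs the other way.
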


\begin{proof} Since $M$ is a subspace and $E$, there exists a projection map $p: E \to{M}$ such that $p(x) = x$, for all $x \in {M}$.
Then $V := \rm{ker}(p)$ is also a subspace of $E$ and a complement of
${M}$ in $E$. We define the extending datum of
${M}$ through $V$ by the following formulas:
\begin{eqnarray*}
\triangleright = \triangleright_p : M \times {V} \to
{V}, \qquad x \triangleright u &:=& [x, \, u] - p \bigl([x, \, u]\bigl)\\
\triangleleft = \triangleleft_p: M \times {V} \to M,
\qquad x \triangleleft u &:=& p \bigl([x, \,u]\bigl)\\
\omega= \omega_p: V\times V \to M, \qquad \omega(u, v) &:=&
p \bigl([u, \, v]\bigl)\\
{[\,, \,]} = {[\,, \,]}_p: V \times V \to V, \qquad [u, v]
&:=& [u, \, v] - p \bigl([u, \, v]\bigl)
\end{eqnarray*}
for any $x , y\in {M}$ and $u$, $v\in V$. First of all, we
observe that the above maps are all well defined bilinear maps: $u
\triangleleft x \in V$ and $\{u, \, v \} \in V$, for all $u$, $v
\in V$ and $x \in {M}$. We shall prove that
$\Omega({M}, V) = \bigl(\triangleleft, \, \triangleright,
\, \omega, \{-, \, -\} \bigl)$ is an extending structure of
${M}$ trough $V$ and
\begin{eqnarray*}
\varphi: {M} \,\natural \, V \to E, \qquad \varphi(x, u)
:= x+u
\end{eqnarray*}
is an isomorphism of Malcev algebras that stabilizes ${M}$
and co-stabilizes $V$. Now $\varphi: {M} \times V \to E$, $\varphi(x, \, u) := x+u$
is a linear isomorphism between the Malcev algebra $E$ and the direct
product of vector spaces ${M}\oplus V$ with the inverse
given by $\varphi^{-1}(v) := \bigl(p(v), \, v - p(v)\bigl)$, for
all $v \in E$. Thus, there exists a unique Malcev algebra structure
on ${M}\oplus V$ such that $\varphi$ is an isomorphism
of Malcev algebras and this unique bracket on ${M}\oplus V$
is given by
$$
[(x, u), \, (y, v)] := \varphi^{-1} \bigl([\varphi(x, u), \,
\varphi(y, v)]\bigl)
$$
for all $x$, $y \in {M}$ and $u$, $v\in V$. The proof is
completely finished if we prove that this bracket coincides with
the one defined by above associated to the system
$\bigl(\triangleleft_p, \, \triangleright_p, \,\omega_p, {[\,, \, ]}_p
\bigl)$. Indeed, for any $x$, $y \in {M}$ and $u$, $v\in
V$ we have:
\begin{eqnarray*}
[(x, u), \, (y, v)] &=& \varphi^{-1} \bigl([\varphi(x, u), \,
\varphi(y, v)]\bigl)
= \varphi^{-1} \bigl([x, \, y] + [x, \, v] + [u, \, y] + [u, \, v]\bigl)\\
&=& \bigl(p([x, \, y]), [x, \, y] - p([x, \, y])\bigl) +
\bigl(p([x, \, v]), [x, \, v] - p([x, \, v])\bigl)\\
&& + \bigl(p([u, \, y]), [u, \, y] - p([u, \, y])\bigl) +
\bigl(p([u, \, v]), [u, \, v] - p([u, \,v ])\bigl)\\
&=& \Bigl(p([x, \, y]) + p([x, \, v]) + p([u, \, y]) +
p([u, \, v]), \ [x, \, y] + [x, \, v]\\
&&+ [u, \, y] + [u, \, v] - p([x, \, y]) - p([x, \, v])
- p([u, \, y]) - p([u, \, v])\Bigl)\\
&=& \Bigl([x, \, y] + x \triangleleft v - y \triangleleft u , \omega (x,y) + x \triangleright v - y \triangleright u+ [u,v]\Bigl)
\end{eqnarray*}
as needed. Moreover, the following diagram is commutative
\begin{eqnarray*}
\xymatrix {& {M} \ar[r]^{i_{{M}}} \ar[d]_{Id} &
{{M} \,\natural \, V} \ar[r]^{q} \ar[d]^{\varphi} & V \ar[d]^{Id}\\
& {M} \ar[r]^{i} & {E}\ar[r]^{\pi } & V}
\end{eqnarray*}
where $\pi : E \to V$ is the projection of $E = {M}\oplus V$
on the vector space $V$ and $q: {{M} \,\natural \, V} \to
V$, $q (x, u) := u$ is the canonical projection. The proof is now
finished.
\end{proof}

\begin{lemma}
Let $\Omega({M}, V) = \bigl(\triangleleft, \,\triangleright, \, \omega, [\cdot,\cdot] \bigl)$ and $\Omega'({M}, V) = \bigl(\triangleleft ', \,
\triangleright ', \, \omega', [\cdot,\cdot]' \bigl)$ be two Malcev extending structures of ${M}$ though $V$,  ${M} \,\natural \, V$ and $ {M} \,\natural \, ' V$
the associated unified products. Then there exists a bijection between the set of all morphisms of Malcev algebras $\psi: {M} \,\natural \, V \to {M} \,\natural \, ' V$
which stabilizes ${M}$ and the set of pairs $(r, s)$, where $r: V \to {M}$, $s: V \to V$ are two linear maps
satisfying the following compatibility conditions for any $x \in {M}$, $u$, $v \in V$:
\begin{enumerate}
\item[(M1)] $s (u) \triangleleft ' x = s(u \triangleleft x)$,
\item[(M2)] $r(u \triangleleft x) = [r(u),\, x] - u
\triangleright x + s(u) \triangleright ' x$,
\item[(M3)] $s([u,\,v])= [s(u), s(v)]' + s(u) \triangleleft ' r(v) - s(v) \triangleleft 'r(u)$,
\item[(M4)] $r([u, v]) = [r(u), \, r(v)] + s(u) \triangleright ' r(v) - s(v) \triangleright ' r(u) + \omega'
\bigl(s(u), s(v)\bigl) - \omega(u, v)$.
\end{enumerate}
Under the above bijection the homomorphism of Malcev algebras $\psi =
\psi_{(r, s)}: {M} \,\natural \, V \to {M}
\,\natural \, ' V$ corresponding to $(r, s)$ is given for any $x
\in {M}$ and $u \in V$ by:
$$
\psi(x, u) = (x + r(u), s(u)).
$$
Moreover, $\psi = \psi_{(r, s)}$ is an isomorphism if and only if
$s: V \to V$ is an isomorphism and $\psi = \psi_{(r, s)}$
co-stabilizes $V$ if and only if $s = id_V$.
\end{lemma}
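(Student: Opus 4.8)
The plan is to first pin down the exact shape of any bracket-preserving linear map $\psi$ that stabilizes $M$, and then to read off the four conditions by expanding the homomorphism identity on a spanning set.

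First I would determine the form of $\psi$. Saying that $\psi$ stabilizes $M$ means precisely that the left square of the defining diagram commutes, i.e.\ $\psi(x,0)=(x,0)$ for every $x\in M$. Hence for $u\in V$ I may write $\psi(0,u)=\big(r(u),s(u)\big)$ with $r(u)\in M$ and $s(u)\in V$; linearity of $\psi$ forces $r:V\to M$ and $s:V\to V$ to be linear, and, again by linearity, $\psi(x,u)=\psi(x,0)+\psi(0,u)=\big(x+r(u),s(u)\big)$. Conversely, for any linear pair $(r,s)$ the prescription $(x,u)\mapsto\big(x+r(u),s(u)\big)$ defines a linear map that stabilizes $M$. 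This already establishes the asserted bijection at the level of linear maps, and it remains only to characterize when such a $\psi$ is a morphism of Malcev algebras.

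Next I would impose multiplicativity. Since the bracket of a unified product is bilinear, $\psi$ is a homomorphism if and only if $\psi\big([(x,u),(y,v)]\big)=[\psi(x,u),\psi(y,v)]'$ holds on a spanning set, so it suffices to test the identity on the three types of pairs taken from $M\times\{0\}$ and $\{0\}\times V$. (Note that, in contrast with \thref{1}, the Malcev identity is never invoked: being a homomorphism is a purely first-order, bilinear condition.) For the pair $(x,0),(y,0)$ both unified brackets restrict to $([x,y],0)$ and $\psi$ fixes $M$, so the identity holds automatically and yields nothing. For the mixed pair $(x,0),(0,u)$ I would expand both sides with the bracket of \deref{exdatum}, using that every action of or on the zero vector vanishes and that $\omega$ and $[\,,\,]$ are skew-symmetric; comparing the $M$-components and the $V$-components of the two sides produces exactly (M1) and (M2). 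For the pair $(0,u),(0,v)$ the same expansion gives $\big(\omega(u,v)+r([u,v]),\,s([u,v])\big)$ on the left and the primed bracket of $\big(r(u),s(u)\big)$ and $\big(r(v),s(v)\big)$ on the right; matching $M$- and $V$-components then yields (M3) and (M4).

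Finally I would settle the two structural claims. The map $\psi$ is block-triangular with identity on the $M$-component, $\psi(x,u)=\big(x+r(u),s(u)\big)$, so it is bijective if and only if $s$ is: when $s$ is invertible the inverse is $\psi^{-1}(x,w)=\big(x-r(s^{-1}(w)),s^{-1}(w)\big)$, and conversely bijectivity of $\psi$ forces $s$ to be bijective on the $V$-component. For co-stabilizing $V$, writing $q,q'$ for the canonical projections onto $V$, the right square commutes exactly when $q'\psi=q$, that is $s(u)=u$ for all $u$, i.e.\ $s=\mathrm{id}_V$. I expect the middle step to be the main obstacle: the mixed and pure-$V$ expansions each unfold into several terms coming from the four summands of the unified bracket, and careful bookkeeping of signs together with the skew-symmetry of $\omega$ and $[\,,\,]$ is needed to bring them into precisely the stated forms (M1)--(M4).
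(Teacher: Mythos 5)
Your argument is correct and is exactly the intended one: the paper states this lemma without proof, and the standard Agore--Militaru argument it implicitly relies on proceeds just as you do --- reading off the form $\psi(x,u)=(x+r(u),s(u))$ from linearity together with stabilization of $M$, then testing the homomorphism identity on the pairs $(x,0),(y,0)$; $(x,0),(0,v)$; $(0,u),(0,v)$ (where the Malcev identity indeed plays no role) to obtain (M1)--(M4), with the bijectivity and co-stabilization claims following as you say. The only caveat is notational: the printed conditions (M1)--(M4) involve expressions such as $u\triangleleft x$ and $u\triangleright x$ with $u\in V$, $x\in M$, which are not well typed under the paper's own convention $\triangleleft\colon M\times V\to M$, $\triangleright\colon M\times V\to V$ (they are carried over from Agore--Militaru's conventions), so your expansion will produce the correctly typed versions of these identities rather than their literal printed forms.
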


We denote by $\mathfrak{T}({M},V)$ the set of all extending structures $\Omega(M, V)$.
It is easy to see that  $\equiv$  and $\approx$ are equivalence relations on the set  $\mathfrak{T}(M,V)$.
By the above constructions  and Lemmas  we obtain the following result.

\begin{theorem}
Let ${M}$ be a Malcev algebra, $E$ a  vector space which contains ${M}$ as a  subspace and $V$ a complement
of ${M}$ in $E$. Then, we get:\\
(1)Denote $\mathcal{E}\mathcal{H}^2(V,{M}):=\mathfrak{T}({M},V)/\equiv$. Then, the map
\begin{eqnarray}
\mathcal{E}\mathcal{H}^2(V,{M})\rightarrow Extd(E,{M}),~~~~\overline{\Omega({M},V)}\rightarrow {M}\natural V
\end{eqnarray}
is bijective, where $\overline{\Omega({M},V)}$ is the equivalence class of $\Omega({M},V)$ under $\equiv$.\\
(2) Denote $\mathcal{U}\mathcal{H}^2(V,{M}):=\mathfrak{T}({M},V)/\approx$. Then, the map
\begin{eqnarray}
\mathcal{U}\mathcal{H}^2(V,{M})\rightarrow Extd'(E,{M}),~~~~\overline{\overline{\Omega({M},V)}}\rightarrow {M}\natural V
\end{eqnarray}
is bijective, where $\overline{\overline{\Omega({M},V)}}$ is the equivalence class of $\Omega({M},V)$ under $\approx$.\\
\end{theorem}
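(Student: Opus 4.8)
The plan is to obtain the two bijections by combining \thref{classif} with the preceding Lemma: the former yields surjectivity, the latter yields well-definedness and injectivity. I treat (1) and (2) in parallel, viewing (2) as the specialization of (1) in which all isomorphisms are required to co-stabilize $V$. Throughout I fix the linear identification $E = M\oplus V$ provided by the chosen complement, so that each unified product $M\natural V$ is read as a Malcev algebra structure on $E$ having $M$ as a subalgebra, and the class maps sending $\overline{\Omega(M,V)}$ to $M\natural V$ make sense.

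For well-definedness and injectivity I use that, on $\mathfrak{T}(M,V)$, the relation $\Omega\equiv\Omega'$ means precisely that there is a pair $(r,s)$ with $s$ bijective satisfying (M1)--(M4), while $\Omega\approx\Omega'$ means the same with $s=id_V$. By the preceding Lemma such pairs are in bijection with the Malcev morphisms $\psi_{(r,s)}:M\natural V\to M\natural'V$ stabilizing $M$, and $\psi_{(r,s)}$ is an isomorphism exactly when $s$ is bijective and co-stabilizes $V$ exactly when $s=id_V$. Under the identification $E=M\oplus V$, an isomorphism stabilizing $M$ is exactly an equivalence in the sense of \deref{echivextedn}. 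Hence $\Omega\equiv\Omega'$ if and only if $M\natural V$ and $M\natural'V$ lie in the same class of $Extd(E,M)$, and $\Omega\approx\Omega'$ if and only if they lie in the same class of $Extd'(E,M)$. The first of these equivalences gives at once both that the map in (1) is well defined on $\equiv$-classes and that it is injective; the second does the same for (2).

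Surjectivity is where \thref{classif} enters. Given any class in $Extd(E,M)$, pick a representative Malcev structure $(E,[\,,\,])$ in which $M$ is a subalgebra. By \thref{classif} there is an extending structure $\Omega(M,V)$ together with an isomorphism $(E,[\,,\,])\cong M\natural V$ that stabilizes $M$ and, moreover, co-stabilizes $V$. Thus the chosen representative is equivalent to $M\natural V$, so its class is the image of $\overline{\Omega(M,V)}$; this proves surjectivity for (1). Since the isomorphism furnished by \thref{classif} also co-stabilizes $V$, the same representative shows surjectivity for (2), completing both proofs.

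I expect no genuine computational obstacle here, the real content having already been absorbed into \thref{classif} and the preceding Lemma. The only delicate point is the bookkeeping in the first two paragraphs: one must check that the notions ``stabilizes $M$'' and ``co-stabilizes $V$'' of \deref{echivextedn}, once transported along the fixed identification $E=M\oplus V$, agree with the corresponding conditions on the pairs $(r,s)$, so that $\equiv$ and $\approx$ on $\mathfrak{T}(M,V)$ are indeed the equivalence relations defining $\mathcal{EH}^2(V,M)$ and $\mathcal{UH}^2(V,M)$, and the two class maps become mutually inverse to the assignments above.
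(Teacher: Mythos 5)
Your argument is correct and follows exactly the route the paper intends: the paper gives no written proof beyond the remark that the result follows ``by the above constructions and Lemmas,'' i.e.\ \thref{classif} for surjectivity and the preceding Lemma (via the pairs $(r,s)$ satisfying (M1)--(M4)) for well-definedness and injectivity, which is precisely your decomposition. Your write-up is in fact more explicit than the paper's about the transport of the relations $\equiv$ and $\approx$ from Malcev structures on $E$ to $\mathfrak{T}(M,V)$, but the substance is the same.
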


\section{Special cases of unified products}\selabel{cazurispeciale}

In this section, we consider the following problem:  what is conditions for unified products when $M$ and $V$ are all subalgebras of $E$? In fact, we obtain
two special cases of unified product: crossed products and matched pairs of Malcev algebras.

\subsection{Crossed products of Malcev algebras}
\begin{definition}
Let $M$ and $V$ be two Malcev algebras with two bilinear  maps $\trl:{M}\times {V} \to {M}$ and $\omega: V\times V \to M$ where $\omega$ is a skew-symmetric map.
We define on the direct sum vector space $M\oplus V$ with the  bracket $[\cdot,\cdot]: (M\oplus  V) \times (M\oplus V) \to M\oplus  V$ by:
\begin{eqnarray}
[(x, u), (y, v)] = \Big([x,y] + x\triangleleft v - y \triangleleft u + \omega (u,v), [u,v] \Big),
\end{eqnarray}
for all $x,y,z,t \in M, u,v,p, q \in V$.
Then $M\oplus V$  is a  Malcev algebra under the above bracket if and only if  the following compatibility conditions hold:
\begin{enumerate}
\item[(CP1)]$
 [ {[ {x,z} ],y \trl q} ] { = }[ {[ {x,y} ],z} ] \trl q + [{[ {y,z} ] \trl q,x} ]  + [ {[ {z \trl q,x} ],y} ] - [ {[ {x \trl q,y} ],z} ]$,

\item[(CP2)]
$
 [ {[ {x,z} ],\omega (v,q)} ] + [ {x,z} ]\trl [ {v,q} ] \\
{ = }[ {x \trl v,z} ] \trl q - [ {\left( {z \trl v} \right) \trl q,x} ]+ x \trl \left( {\left( {z \trl v} \right) \trl q} \right) + [ {z \trl q,x} ] \trl v \\
- [ {\left( {x \trl q} \right) \trl v,z}] + z \trl\left( {\left( {x \trl q} \right) \trl v} \right)
 $,

\item[(CP3)]
$ [ {x \trl p,y \trl q} ] \\
{ = }\left( {[ {x,y} ] \trl p} \right)\trl q  + [ {\left( {y \trl p} \right) \trl q,x}]  +[ {[ {\omega \left( {p, q} \right), x} ],y} ] - [{x \trl [ {p, q} ],y} ] \\
 + y \trl \left( {x \trl [ {p, q} ]} \right) -[ {x \trl q,y} ] \trl p$,

\item[(CP4)]
$
 [ {x \trl p,\omega (v,q)} ] + \left( {x \trl p} \right) \trl [ {v,q} ]\\
{ = }\left( {\left( {x \trl v} \right) \trl p} \right) \trl q - x \trl [ {[ {v,p} ], q} ] - x \trl \left( {\omega \left( {v,p} \right) \trl q} \right) \\
 + [ {\omega \left( {v,p} \right) \trl q,x} ] + [{\omega \left( {[ {v,p} ], q} \right), x} ]  + [ {\omega \left( {p, q} \right), x} ] \trl v \\
 - \left({x \trl [ {p, q} ]} \right) \trl v - \omega\left( {x \trr [ {p, q} ], v} \right) - \left( {\left( {x\trl q} \right) \trl v} \right) \trl p $,

\item[(CP5)]
$
 [ {\omega \left( {u,p} \right),y \trl q} ] - \left( {y \trl q} \right) \trl [ {u,p} ]\\
{ = } - \left( {\left( {y \trl u} \right) \trl p}\right) \trl q  - \omega \left( {\left( {y \trl u} \right) \trl p, q}\right) + \left( {\left( {y \trl p} \right) \trl q}\right) \trl u  \\
 +[ {\omega \left( {p, q} \right) \trl u,y} ] + [ {\omega \left( {[ {p, q} ], u} \right),y} ] - y\trl [ {[ {p, q} ], u} ] - y \trl\left( {\omega \left( {p, q} \right) \trl u} \right) \\
 + [ {\omega\left({q, u} \right), y} ] \trl p - \left({y \trl [ {q, u} ]} \right) \trl p$,

\item[(CP6)]
$
 [ {\omega \left( {u,p} \right),\omega (v,q)} ] + \omega \left({u,p} \right) \trl [ {v,q} ] - \omega (v,q) \trl [ {u,p} ] + \omega \left( {[{u, p}], [ {v, q} ]} \right) \\
{ = }\left( {\omega \left( {u,v} \right) \trl p} \right)\trl q + \omega \left( {[ {u,v} ],p} \right)\trl q + \omega \left( {[ {[ {u,v} ],p} ], q}\right) \\
 +\left( {\omega \left( {v,p} \right) \trl q} \right) \trl u+ \omega \left( {[ {v,p} ], q} \right) \trl u \\
 + \omega \left( {[ {[ {v,p} ], q} ], u} \right) +\left( {\omega \left( {p, q} \right) \trl u} \right) \trl v\\
 + \omega \left( {[ {p, q} ], u} \right) \trl v + \omega\left( {[ {[ {p, q} ], u} ], v} \right) + \omega \left({\omega \left( {p, q} \right) \trr u,v} \right) \\
 + \left( {\omega \left( {q, u} \right) \trl v} \right)\trl p + \omega \left( {[ {q, u} ], v} \right)\trl p + \omega \left( {[ {[ {q, u} ], v} ],p}\right)$.
\end{enumerate}
This will be called the \textit{crossed product} of $M$ and $V$ and we denote it by $M\#_\omega^\trl V$.
\end{definition}

\subsection{Skew-crossed products of Malcev algebras}
\begin{definition}
Let $M$ and $V$ be two Malcev algebras with two bilinear  maps $\trl:{M}\times {V} \to {M}$ and $\omega: V\times V \to M$ where $\omega$ is a skew-symmetric map.
We define on the direct sum vector space $M\oplus V$ with the  bracket $[\cdot,\cdot]: (M\oplus  V) \times (M\oplus V) \to M\oplus  V$ by:
\begin{eqnarray}
[(x, u), (y, v)] = \Big([x,y] + \omega (u,v), x \triangleright v - y \triangleright u+ [u,v] \Big),
\end{eqnarray}
for all $x,y,z,t \in M, u,v,p, q \in V$.
Then $M\oplus V$  is a  Malcev algebra under the above bracket if and only if  the following compatibility conditions hold:
\begin{enumerate}
\item[(SP1)]
$
 [ {[ {x,z} ],\omega (v,q)} ]  { = } -[ {\omega \left( {z \trr v,q} \right), x} ]- \omega \left( {x \trr \left( {z \trr q} \right),v}\right) - [ {\omega \left( {x \trr q,v} \right),z} ]
  $,

\item[(SP2)]
$
\omega \left( {x \trr p,y \trr q} \right) { = } \omega \left( {[ {x,y} ] \trr p, q}\right) + [ {\omega \left( {y \trr p, q} \right), x} ] +[ {[ {\omega \left( {p, q} \right), x} ],y} ]
$,

\item[(SP3)]
$
 \omega \left( {x \trr p,[ {v,q} ]} \right) \\
{ = } \omega \left( {[ {x \trr v,p} ], q}\right) + [{\omega \left( {[ {v,p} ], q} \right), x} ] - \omega\left( {x \trr [ {p, q} ], v} \right)  - \omega\left( {[ {x \trr q,v} ],p} \right)
 $,
\item[(SP5)]
$
 [ {x,z} ] \trr [ {v,q} ]= z \trr [ {x \trr q,v} ] - [{x \trr \left( {z \trr q} \right),v} ]$,

\item[(SP6)]
$ [ {y,t} ] \trr \left( {x \trr p}\right) =  t \trr \left( {[ {x,y} ]\trr p} \right) - [ {[ {t,x} ],y} ] \trr p$,

\item[(SP7)]
$
 [ {x \trr p,[ {v,q} ]} ] - \omega (v,q)\trr \left( {x \trr p} \right) \\
{ = }[ {[ {x \trr v,p} ], q} ] - x\trr [ {[ {v,p} ], q} ]  + \omega \left( {x \trr v,p} \right) \trr q  - x \trr\left( {\omega \left( {v,p} \right) \trr q} \right) \\
- [ {x \trr [ {p, q} ], v} ]  - [ {[ {x \trr q,v} ],p} ]+ [ {\omega\left( {p, q} \right), x} ] \trr v - \omega \left( {x \trr q,v} \right) \trr p $,

\item[(SP8)]
$
 [ {x \trr p,y \trr q} ]  = [ {[ {x,y} ] \trr p, q} ]  - x \trr [ {y \trr p, q} ]  + y \trr \left( {x \trr [{p, q} ]} \right) + [ {y \trr \left( {x \trr q}\right),p} ]$,

 \item[(SP4)]
$
 [ {\omega \left( {u,p} \right),\omega (v,q)} ] + \omega \left( {[{u, p}], [ {v, q} ]} \right) \\
{ = } \omega \left( {[ {[ {u,v} ],p} ], q}\right) + \omega \left( {\omega \left( {u,v} \right) \trr p, q} \right) \\
 + \omega \left( {[ {[ {v,p} ], q} ], u} \right) +\omega \left( {\omega \left( {v,p} \right) \trr q, u} \right) \\
+ \omega\left( {[ {[ {p, q} ], u} ], v} \right) + \omega \left({\omega \left( {p, q} \right) \trr u,v} \right) \\
 + \omega \left( {[ {[ {q, u} ], v} ],p}\right)  + \omega \left( {\omega \left( {q, u} \right) \trr v,p} \right)
 $,

\item[(SP9)]
$ \omega \left({u,p} \right) \trr [ {v,q} ] - \omega (v,q)\trr [ {u,p} ] \\
{ = }[ {\omega \left( {u,v} \right) \trr p, q} ]  +\omega \left( {[ {u,v} ],p} \right) \trr q \\
 + [{\omega \left( {v,p} \right) \trr q, u} ]  + \omega\left( {[ {v,p} ], q} \right) \trr u \\
 + [{\omega \left( {p, q} \right) \trr u,v} ] + \omega\left( {[ {p, q} ], u} \right) \trr v \\
+ [{\omega \left( {q, u} \right) \trr v,p} ]  + \omega\left( {[ {q, u} ], v} \right) \trr p $.
\end{enumerate}
This will be called the \textit{skew crossed product} of $M$ and $V$ and we denote it by $M\#_\omega^\trr V$.
\end{definition}

\subsection{Matched pair for Malcev algebras}

\begin{theorem}
 Let $\left( {M, [\,,\,]} \right),\left({V, [\,,\,]}\right)$ be two Malcev algebras. If there are bilinear maps $\triangleright:M  \times V\to V,\, \triangleleft: V\times M \to V$, define
 bracket on $M \oplus V$ by:
\begin{eqnarray}
[(x, u), (y, v)] = \Big([x,y] + x\triangleleft v - y \triangleleft u, x \triangleright v - y \triangleright u+ [u,v] \Big).
\end{eqnarray}
Then ${M  \oplus V}$ is a Malcev algebra under the above bracket if and only if
the following compatibility conditions hold:
\begin{enumerate}
\item[(MP1)]
\begin{eqnarray*}
\notag&&  [ [x\trl u, y ], z ]- [[y, z ], x ]\trl u- [ y\trl (x\trr u ), z ]\\
\notag&&- [ [z\trl u, x ], y ]+  z\trl (y\trr (x\trr u ) )+ [[x, z ], y\trl u ]\\
\notag&&- [ [x, y ], z ]\trl u+  [x, z ] \trl (y \trr u )+ [x\trl (z\trr u ), y]\\
&&+  x\trl ( [y, z ] \trr u ) - y\trl (x\trr (z\trr u ))=0,
\end{eqnarray*}
\item[(MP2)]
\begin{eqnarray*}
\notag&& [ [x\trr u, v ], w ]- [x\trr [v, w ], u ]- [(x\trl u )\trr  v, w]\\
\notag&&- [ [x\trr w, u ], v ]+ (x\trl u )\trl v )\trr w+ [ [u, w ], x\trr v]\\
\notag&&-x\trr [ [u, v ], w ]+ [ (x\trl w )\trr u, v ]+ (x\trl v )\trr [u, w]\\
&&+(x \trl  [v, w ]   )\trr u-((x\trl w )\trl u )\trr v=0,
\end{eqnarray*}
\item[(MP3)]
\begin{eqnarray*}
\notag&&y\trl (x\trr [u, v ] ) -(y\trl v )\trl (x\trr u )  +[v\trl x , y ]\trl u\\
\notag&&- [(y\trl u )\trl v  , x ]- [x\trl [u, v ], y ]+(x\trl u )\trl (y\trr v)\\
\notag&&-x\trl ( [y\trr u, v ] ) -([x, y ]\trl u )\trl v+ [x\trl u, y\trl v ])\\
&&-(y\trl (x\trr v ))\trl u +x\trl ((y\trl u )\trr  v )=0,
\end{eqnarray*}
\item[(MP4)]
\begin{eqnarray*}
\notag&&([x, y ] \trl u) \trr v-(x\trl u )\trr  (y\trr v ) +x\trr [y\trr u, v]\\
\notag&&- [y\trr (x\trr v ), u ]- [ [x, y ] \trr u, v ]+ (y\trl v )\trr (x\trr u)\\
\notag&&- ( [x\trl v, y ] )\trr u-y\trr (x\trr [u, v ] )+ [x\trr u, y\trr v] \\
&&-x\trr ((y\trl u )\trr  v )+ (y\trl (x\trr v )) \trr u=0,
\end{eqnarray*}
\item[(MP5)]
\begin{eqnarray*}
\notag&&[y\trl v, x ]\trl u -y\trl ( [x\trr v, u ] ) - [(x\trl v )\trl u,\  y]\\
\notag&&+[x\trl u, y ]\trl v - [(y\trl u )\trl v, x ]- x\trl [y\trr u, v]\\
\notag&&- (y\trl (x\trr u )\trl v- (x\trl (y\trr v ))\trl u+[x, y ]\trl [u, v]\\
&&+x\trl((y\trl u )\trr  v) + y\trl((x\trl v)\trr u)=0,
\end{eqnarray*}
\item[(MP6)]
\begin{eqnarray*}
\notag&&x\trr [y\trr v, u ]-  [y\trl u, x ] \trr v- [x\trr (y\trr u ), v ]\\
\notag&&+y\trr [x\trr u, v ]- [y\trr (x\trr v ), u ]-  [x\trl v, y ] \trr u\\
\notag&&-y\trr ((x\trl u )\trr  v )-x\trr ((y\trl v )\trr u )+ [x, y ]\trr [u, v]\\
&&+ ( y\trl (x\trr v )  )\trr u+ (x\trl (y\trr u )  )\trr v=0.
\end{eqnarray*}
\end{enumerate}
This is called a matched pair of   two Malcev algebras $M$ and $V$ if the above conditions are satisfied. 
%This Malcev algebra is called the \emph{bicrossed product}  of $M$ and $V$.  
We will denoted it by  $M \, \bowtie V$.
\end{theorem}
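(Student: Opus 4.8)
The plan is to obtain this statement as the special case of \thref{1} in which the cocycle-type map $\omega$ is identically zero. Indeed, the matched-pair bracket displayed above is literally the unified-product bracket of \deref{exdatum} once one takes $\omega\equiv 0$ and lets $[\,,\,]\colon V\times V\to V$ be the given Malcev bracket of $V$. That $\omega=0$ is the correct specialisation is exactly the requirement that $V$ itself be a subalgebra of $E$: the $M$-component of $[(0,u),(0,v)]$ equals $\omega(u,v)$, so $V=0\oplus V$ is closed under the bracket precisely when $\omega$ vanishes. Consequently, by \thref{1}, $M\oplus V$ is a Malcev algebra under the matched-pair bracket if and only if the eleven axioms (U1)--(U11) hold with $\omega=0$, and the whole argument reduces to specialising those eleven identities and collecting what survives.

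Setting $\omega=0$ disposes of two axioms at once. Every term of (U6) carries a factor $\omega$, so (U6) collapses to $0=0$; and (U11), after its $\omega$-terms are deleted, is exactly the four-variable Malcev identity (3) for the bracket of $V$, which holds by hypothesis because $V$ is a Malcev algebra. The organising principle for the rest is the \emph{bidegree} of each axiom, that is, the number of its arguments drawn from $M$ as against from $V$. The pure bidegrees $(4,0)$ and $(0,4)$ merely reproduce the Malcev identities on $M$ and on $V$; the mixed bidegrees $(3,1)$, $(2,2)$ and $(1,3)$, read off separately in the $M$-component and in the $V$-component, are what yield (MP1)--(MP6). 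More precisely, (MP1) and (MP2) arise from the bidegrees $(3,1)$ and $(1,3)$ (taken in the $M$- and $V$-component respectively), while the $(2,2)$ bidegree splits into two inequivalent configurations---according to whether the two $V$-arguments occupy the same pair or opposite pairs of slots in the left-hand side $[[x,z],[y,w]]$ of (3)---and for this reason contributes the four longer relations (MP3)--(MP6).

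The main obstacle will be this last matching step, which is a long but essentially mechanical term-by-term comparison: after deleting $\omega$-terms from (U1)--(U5) and (U7)--(U10), one must rename dummy variables and invoke skew-symmetry of the brackets of $M$ and $V$ to see that, upon reorganisation, the survivors are precisely (MP1)--(MP6). The genuinely delicate point is to account correctly for the ``module-type'' identities---those of bidegree $(1,3)$ in the $M$-component, expressing that $(M,\trl)$ is a right module over $V$, and of bidegree $(3,1)$ in the $V$-component, expressing that $(V,\trr)$ is a left module over $M$, each carrying its mutual $\trl$/$\trr$ corrections---so that no surviving specialisation is overlooked when the relations are assembled into the final six. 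A self-contained alternative that sidesteps (U1)--(U11) is to verify the Malcev identity by hand: skew-symmetry of the matched-pair bracket is immediate from that of $[\,,\,]_M$, $[\,,\,]_V$ and the antisymmetric cross terms, and one then substitutes homogeneous arguments $X_i=(x_i,u_i)$, each lying wholly in $M$ or wholly in $V$, into identity (3) and sorts the expansion by bidegree and component. On either route, the heart of the computation is the $(2,2)$ bidegree, where the expressions are longest and the cyclic symmetry of (3) must be used carefully to confirm that only the stated relations remain.
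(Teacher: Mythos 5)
Your overall strategy---realising the matched-pair bracket as the unified product with $\omega\equiv 0$ and reading off the surviving axioms of \thref{1}---is exactly the route the paper intends (it offers no written proof of this theorem, presenting it only as a special case of the unified product), and your observations that (U6) becomes vacuous and that (U11) reduces to the four-variable Malcev identity of $V$ are correct. The gap lies in the final matching step, which you defer to a ``mechanical term-by-term comparison'': that comparison cannot come out as you claim. Sorting identity (3) for arguments each lying purely in $M$ or purely in $V$ by bidegree and by component, the mixed part consists of \emph{eight} independent families of identities: two of bidegree $(3,1)$ (an $M$-component and a $V$-component), four of bidegree $(2,2)$ (two slot configurations times two components), and two of bidegree $(1,3)$. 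The conditions (MP1)--(MP6) occupy only six of these slots: (MP1) is the $(3,1)$ $M$-component, (MP2) is the $(1,3)$ $V$-component, and (MP3)--(MP6) are the four $(2,2)$ components. The two remaining slots---the $V$-component of bidegree $(3,1)$, which at $\omega=0$ is (U8) and has left-hand side $[x,z]\trr(y\trr u)$, and the $M$-component of bidegree $(1,3)$, which is (U4)/(U5) at $\omega=0$ and has left-hand side $(x\trl p)\trl[v,q]$---are nonvacuous and do not appear among (MP1)--(MP6).

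You flag precisely these ``module-type'' identities as the delicate point, but then assert that they get ``assembled into the final six.'' Identities of distinct bidegree and distinct target component cannot be combined with, or deduced from, one another by renaming variables or invoking skew-symmetry, so this assertion is unsupported and is exactly where the proof breaks. To close the gap you must either show that these two families follow from (MP1)--(MP6) (which the Lie-algebra analogue, where both module axioms are independently required in a matched pair, strongly suggests is impossible) or add them as two further conditions, in which case the theorem as stated is incomplete. A secondary caution: several of the printed axioms (U1)--(U11) contain type errors (for instance $x\trr\bigl(t\trl(y\trr p)\bigr)$ in (U8) applies $\trr$ to an element of $M$ in its second slot), so your proposed self-contained alternative of expanding identity (3) directly is in fact the only reliable way to pin down the correct list of conditions; but it, too, must confront the discrepancy of eight surviving families versus the six stated.
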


\section{Flag extending structures}
In this section, we study the case when $V$ is a one dimensional vector space. This will be  called flag extending structures.

\begin{definition} Let $M$ be a Malcev algebra. Then $(\lambda,D)$ is a twisted derivation of  $M$  if  there exist  maps $\lambda :M \to
k$ and $D:M \to M$ such that the following conditions hold:
\begin{enumerate}
\item[(T1)] $
 [ {[ {x,z} ],D( y )} ] + \lambda ( y)D( {[ {x,z} ]} ) + \lambda ( {[ {y,z}]} )D( x )
 - D( {[ {[ {y,z} ], x} ]} )\\
  + [{\lambda ( z )D( x ),y} ]- [ {[{D( z ), x} ],y} ] - \lambda ( x )\lambda ( z )D( y ) +[ {[ {D( x ),y} ],z}]\\
  - [ {\lambda( x )D( y ),z} ] - D( {[ {[ {x,y} ],z} ]} )+ \lambda( x )\lambda ( y )D( z ) = 0 $,

\item[(T2)]
$
 \lambda ( {D( y )} )D( x ) - \lambda( x )D( {D( y )} ) + D( {[{D( x ),y} ]} )
 - [ {D( {D( y )} ), x} ] + \lambda ( y)D( {D( x )} ) \\
 - D( {D( {[ {x,y}]} )} ) + [ {D( x ),D( y )} ] - D( {\lambda( x )D( y )} ) = 0 $,

 \item[(T3)]
$
\lambda ( {D( {[ {x,y} ]} )} ) - \lambda ( {D( x )} )\lambda ( y )  - \lambda( {[ {D( x ),y} ]} ) + \lambda ( x )\lambda ( {D( y )} ) = 0 $,

 \item[(T4)]
$
 D( {[ {D( x ),y} ]} ) - D( {\lambda( x )D( y )} ) - [ {D( {D( y)} ), x} ]
 + \lambda ( {D( y )} )D( x ) + D({[ {D( y ), x} ]} ) \\
 - D( {\lambda ( y)D( x )} ) - [ {D( {D( x )} ),y} ] + \lambda ({D( x )} )D( y ) = 0 $,

 \item[(T5)]
$\lambda ( {[ {D( x ),y} ]} ) + \lambda( {[ {D( y ), x} ]} ) = 0$,

\item[(T6)]
$\lambda ( {[x,z]} )\lambda ( y ) = \lambda ({[[x,y],z]} ) - \lambda ( x )\lambda ( {[y,z]})$.
\end{enumerate}
The set of twisted derivations is  denoted by ${\mathcal F} \, (M)$.
\end{definition}

\begin{proposition}
Let $M$ be a Malcev algebra and $V$ a vector space of
dimension $1$ with a basis $\{u\}$. Then there exists a bijection
between the set of extending structures of $M$ through $V$
and ${\mathcal F} \, (M)$.

Under the above bijective correspondence the extending datum $\Omega(M, V)$
corresponding to $(\lambda, \, D) \in {\mathcal F} \, (M)$ is given by:
\begin{eqnarray}
&&x \triangleleft u =  D(x),  \quad x\triangleright u = \lambda (x) u, \\
&&\omega(u, u) = 0, \quad  [u , u] = 0.
\end{eqnarray}
In this case, the unified product $M\natural V$ associated to the extending structure is given by
\begin{equation}
[(x, u), (y, u)] =\Big ([x,y] +  D(x) -  D(y) , (\lambda (x) - \lambda (y)) u\Big).
\end{equation}
\end{proposition}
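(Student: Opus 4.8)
The plan is to obtain the Proposition by specialising \thref{1} to the one-dimensional space $V = \FF u$. First I would describe the extending data available when $\dim V = 1$. The maps $\trr : M \times V \to V$ and $\trl : M \times V \to M$ are bilinear and $V = \FF u$, so each is determined by its value on $u$; writing $x \trr u = \lambda(x)\,u$ and $x \trl u = D(x)$ defines linear maps $\lambda : M \to \FF$ and $D : M \to M$, and conversely every pair $(\lambda, D)$ corresponds to a unique pair $(\trr, \trl)$. Since $[\,,\,]$ and $\omega$ are skew-symmetric and $\Char \FF \neq 2$, we get $[u,u] = 0$ and $\omega(u,u) = 0$ for free, so these two maps carry no data. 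Hence $(\lambda, D) \mapsto \Omega(M, V) = (\trl, \trr, \omega, [\,,\,])$ is a bijection at the level of data, and the only thing left to prove is that $\Omega(M,V)$ is an extending structure exactly when $(\lambda, D) \in {\mathcal F}(M)$.

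The bracket formula is then immediate: substituting $s = a u$ and $t = b u$ into the unified-product bracket of \deref{exdatum} and using $\omega(u,u) = [u,u] = 0$ gives
$$[(x, a u), (y, b u)] = \bigl([x,y] + b\,D(x) - a\,D(y),\ (b\,\lambda(x) - a\,\lambda(y))\,u\bigr),$$
whose case $a = b = 1$ is the displayed bracket of the Proposition; skew-symmetry is visible on the nose.

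The core is to show that the eleven compatibility conditions (U1)--(U11) of \thref{1} collapse onto (T1)--(T6). Each condition is multilinear in its $V$-arguments, so it suffices to evaluate it at $p = q = v = u$, reading the $V$-valued conclusions off the coefficient of $u$. After substituting $x \trl u = D(x)$, $x \trr u = \lambda(x) u$, $[u,u] = 0$ and $\omega(u,u) = 0$, I would group the conditions by their number of $V$-arguments. The three- and four-argument conditions (U4), (U5), (U6), (U9), (U11) have every surviving term containing $\omega$ or the $V$-bracket on a repeated slot, so they reduce to $0 = 0$. The two single-argument conditions (U1) and (U8) reduce to (T1) and (T6). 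The four two-argument conditions split according to the position of their $V$-slots in the underlying Malcev identity: the \emph{adjacent} pair yields the $M$-component (T2) and the $V$-component (T3), while the \emph{opposite} pair yields (T4) and (T5). This matching is precisely the assertion that $\Omega(M,V)$ is an extending structure iff $(\lambda, D)$ satisfies (T1)--(T6), i.e. iff $(\lambda, D) \in {\mathcal F}(M)$.

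As a cleaner cross-check one may bypass (U1)--(U11) and verify the Malcev identity (3) directly on $E = M \oplus \FF u$. Skew-symmetry is clear, and by multilinearity it is enough to test (3) on quadruples whose entries are basis vectors of $M$ or equal to $u$. The key structural fact is that (3) is invariant under the cyclic shift $(x,y,z,w) \mapsto (y,z,w,x)$, so these quadruples fall into a few orbits indexed by how many entries equal $u$: zero $u$'s holds because $M$ is a Malcev subalgebra, three or four $u$'s give $0 = 0$ after using $[u,u] = \omega(u,u) = 0$, and the one- and two-$u$ orbits reproduce (T1)--(T6) once the $M$- and $V$-components are separated. I expect the single real obstacle to be bookkeeping: aligning the many $[\,,\,]$-, $D$- and $\lambda$-terms so that each surviving case lands exactly on its target condition (in particular terms such as $[D([y,z]),x]$ must be matched carefully, since $D$ is only assumed linear and need not commute with the bracket). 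Once the alignment is verified the bijection follows.
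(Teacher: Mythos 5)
The paper states this Proposition without proof (consistent with its earlier remark that such results follow by direct computation), and your route --- specializing (U1)--(U11) of \thref{1} to $V=\FF u$, using skew-symmetry and $\Char \FF\neq 2$ to force $\omega(u,u)=[u,u]=0$, and matching the surviving identities to (T1)--(T6) --- is exactly the intended argument and is correct in outline. One small imprecision: in (U4), (U5) and (U9) it is not true that every surviving term contains $\omega$ or the $V$-bracket on a repeated slot; terms such as $\bigl((x\triangleleft v)\triangleleft p\bigr)\triangleleft q = D^3(x)$ and $\lambda(D(D(x)))$ survive that step, but they occur in cancelling pairs, so the reduction to $0=0$ still goes through.
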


\begin{theorem}
Let $M$ be an algebra of codimension $1$ in the vector space $V$. Then:
$\operatorname{Extd}(V, M) \cong{\mathcal{A}{{\mathcal{H}}}}^{2}(k, M) \cong \mathcal{F}(M) / \equiv$, where $\equiv$ is the equivalence relation on the set $\mathcal{F}(M)$
defined as follows: $\left(\lambda, D,\right) \equiv$ $\left(\lambda^{\prime}, D^{\prime}\right)$
if and only if $\lambda(x)=\lambda^{\prime}(x)$ and there exists a pair
$(r, s) $, where $r: V \to M$, $s: V \to V$ are two
linear maps, such that:
\begin{eqnarray}
&& D^{\prime}(x) = [r(u) , x] + D(x) + \lambda(x) r(u).
\end{eqnarray}
\end{theorem}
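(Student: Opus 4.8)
The plan is to obtain the two isomorphisms by specializing results already proved to the case $\dim V = 1$, and then to compute what the equivalence relation $\equiv$ becomes on the parametrizing set $\mathcal{F}(M)$.

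For the first isomorphism I would simply specialize the classification theorem. Since $M$ has codimension $1$ in $V$, a complement of $M$ in $V$ is a $1$-dimensional space $ku$; taking the ambient space to be $V$ in that theorem yields a bijection between $\mathfrak{T}(M, ku)/\!\equiv$ and $\operatorname{Extd}(V,M)$, and $\mathcal{A}\mathcal{H}^2(k,M)$ is by definition the former quotient. So this half is a restatement.

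For the second isomorphism I would start from the preceding Proposition, which gives a bijection $\mathfrak{T}(M, ku) \cong \mathcal{F}(M)$ carrying an extending datum to the twisted derivation $(\lambda, D)$ determined by $x \triangleleft u = D(x)$, $x \triangleright u = \lambda(x) u$, $\omega(u,u)=0$, $[u,u]=0$. It then remains to show that, transported along this bijection, the relation $\equiv$ is exactly the relation displayed in the statement. For this I would invoke the Lemma on morphisms of unified products: the pairs with $\Omega \equiv \Omega'$ correspond to pairs $(r,s)$, with $r: ku \to M$ and $s:ku\to ku$ invertible, satisfying (M1)--(M4), and I would specialize these four conditions. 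Because $V = ku$ is one-dimensional and $[\,,\,]$ and $\omega$ are skew-symmetric, we have $[u,u]=\omega(u,u)=0$ and $[u,u]'=\omega'(u,u)=0$; putting $v = u$ in (M3) and (M4) then makes both sides vanish, so these two conditions are automatic. Reading (M1) through the bijection (with $u \triangleleft x = \lambda(x)u$) forces $\lambda = \lambda'$, and reading (M2) (with $u \triangleright x = D(x)$ and $s(u)\triangleright' x = D'(x)$) collapses to $D'(x) = [r(u),x] + D(x) + \lambda(x) r(u)$, which is the asserted relation. Composing the two bijections then gives $\operatorname{Extd}(V,M) \cong \mathcal{A}\mathcal{H}^2(k,M) \cong \mathcal{F}(M)/\!\equiv$.

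The step I expect to be delicate is the specialization of (M1)--(M4): one must align the two notational conventions for $\triangleleft$ and $\triangleright$ used in the extending-datum definition and in the morphism Lemma, and one must correctly handle the invertible scalar carried by $s$ so that it does not appear spuriously in the final transformation law for $D$ (the displayed form corresponds to the normalization $s = \mathrm{id}_{ku}$). Once (M1)--(M4) are seen to reduce to ``$\lambda=\lambda'$ together with the displayed formula for $D'$'', the two equivalence relations are identified and the theorem follows.
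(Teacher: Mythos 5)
Your proposal is correct and follows exactly the route the paper intends: the paper states this theorem without proof, but it is the straightforward specialization of the classification theorem together with the morphism lemma (conditions (M1)--(M4)) to a one-dimensional complement, composed with the bijection $\mathfrak{T}(M,ku)\cong\mathcal{F}(M)$ from the preceding proposition, and your reduction of (M3)--(M4) to trivialities by skew-symmetry and of (M1)--(M2) to $\lambda=\lambda'$ and the transformation law for $D$ is the whole content. Your closing caveat is also well taken: since $\operatorname{Extd}$ only requires stabilizing $M$, the scalar $s(u)=\sigma u$ need not be $1$, so the displayed formula for $D'$ as written really describes the co-stabilizing (cohomologous) relation, an imprecision in the paper's statement rather than in your argument.
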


\begin{example}
Let $M$ be a 4-dimensional Malcev algebra with a basis $\left\{ {e_1 ,e_2 ,e_3 ,e_4 } \right\}$, if it is not a Lie algebra, there is only one, relations given as follows:

\[
\left[ {e_1 ,e_2 } \right] = e_2 ,\,
\left[ {e_1 ,e_3 } \right] = e_3 ,\,
\left[ {e_1 ,e_4 } \right] = - e_4 ,\,
\left[ {e_2 ,e_3 } \right] = e_4.
\]
\end{example}
Now we compute the set of twisted derivations as follows.

Denote by
\[
D\left( {{\begin{array}{*{20}c}
 {e_1 }   \\
 {e_2 }   \\
 {e_3 }   \\
 {e_4 }   \\
\end{array} }} \right) = \left( {{\begin{array}{*{20}c}
 {a_{11} }   & {a_{12} }   &
{a_{13} }   & {a_{14} }   \\
 {a_{21} }   & {a_{22} }   &
{a_{23} }   & {a_{24} }   \\
 {a_{31} }   & {a_{32} }   &
{a_{33} }   & {a_{34} }   \\
 {a_{41} }   & {a_{42} }   &
{a_{43} }   & {a_{44} }   \\
\end{array} }} \right)\left( {{\begin{array}{*{20}c}
 {e_1 }   \\
 {e_2 }   \\
 {e_3 }   \\
 {e_4 }   \\
\end{array} }} \right).
\]
Then we have
\[
D\left( {e_1 } \right) = a_{11} e_1 + a_{12} e_2 +a_{13} e_3 +a_{14} e_4 ,\,
D\left( {e_2 } \right) = a_{21} e_1 + a_{22} e_2 +a_{23} e_3 +a_{24} e_4,\,
\]
\[
D\left( {e_3 } \right) = a_{31} e_1 + a_{32} e_2 +a_{33} e_3 +a_{34} e_4 ,\,
D\left( {e_4 } \right) = a_{41} e_1 + a_{42} e_2 +a_{43} e_3 +a_{44} e_4.
\]

From $(T6)$ we get
$$\lambda ( {[e_1, e_3]} )\lambda( e_2 ) = \lambda ({[[e_1, e_2],e_3]} ) - \lambda ( e_1 )\lambda ( {[e_2, e_3]}),$$
thus
$$\lambda ( e_3 )\lambda ( e_2 ) = \lambda ( e_4 ) - \lambda ( e_1 )\lambda ( e_4 ).$$
We will discuss different choice of $\lambda$ in the following cases.

Case $I$:
 Let $$\lambda ( e_1 ) = \lambda ( e_3 ) = \lambda ( e_4 ) = 0, \,\lambda ( e_2 ) = \lambda_2,$$
and  substituting this  into the twisted derivation conditions we can obtain the following result.

From $(T5)$,
$$\lambda ( {[ {D( e_1 ),e_2} ]} ) + \lambda( {[ {D( e_2 ),e_1} ]} ) = 0.$$
we get
$$a_{11}\lambda_2 = 0,$$
thus
$$a_{11} = 0.$$

From $(T3)$,
$$\lambda ( {D( {[ {e_1,e_2} ]} )} ) - \lambda ( {D( e_1 )} )\lambda ( e_2 )
 - \lambda( {[ {D( e_1 ),e_2} ]} ) + \lambda ( e_1 )\lambda ( {D( e_2 )} ) = 0,$$
 we get
$$a_{22}\lambda_2 - a_{12}\lambda_2^{2} - a_{11}\lambda_2= 0,$$
thus
$$a_{22} = \lambda_2 a_{12} + a_{11} = \lambda_2 a_{12}.$$

From $(T1)$,
\begin{eqnarray*}
&&[ {[ {e_1,e_3} ],D( e_2 )} ] + \lambda ( e_2)D( {[ {e_1,e_3} ]} ) + \lambda ( {[ {e_2,e_3}]} )D( e_1 )
 - D( {[ {[ {e_2,e_3} ],e_1} ]} )\\
&&  + [{\lambda ( e_3 )D( e_1 ),e_2} ]- [ {[{D( e_3 ),e_1} ],e_2} ] - \lambda ( e_1 )\lambda ( e_3 )D( e_2 ) +[ {[ {D( e_1 ),e_2} ],e_3}\\
&&  - [ {\lambda( e_1 )D( e_2 ),e_3} ] - D( {[ {[ {e_1,e_2} ],e_3} ]} )+ \lambda( e_1 )\lambda ( e_2 )D( e_3 ) = 0,
\end{eqnarray*}
$$ \lambda ( e_2 )D( e_3 ) + [ [ D( e_1 ),e_2 ],e_3] - D( e_4 ) = 0,$$
we get
$$\lambda_2(a_{31} e_1 + a_{32} e_2 +a_{33} e_3 +a_{34} e_4) + a_{11}e_4= a_{41} e_1 + a_{42} e_2 +a_{43} e_3 +a_{44} e_4,$$
thus we obtain
$$a_{41} = \lambda_2 a_{31},\, a_{42} = \lambda_2 a_{32},\, a_{43} = \lambda_2 a_{33},\, a_{44} = a_{11} + \lambda_2 a_{34} = \lambda_2 a_{34}.$$

From $(T4)$
\begin{eqnarray*}
&&D( {[ {D( e_1 ),e_2} ]} ) + \lambda ( {D( e_2 )} )D( e_1 ) - D( {\lambda ( e_2)D( e_1 )} ) - [ {D( {D( e_1 )} ),e_2} ] + \lambda ({D( e_1 )} )D( e_2 ) = 0,
\end{eqnarray*}
we have
\begin{eqnarray*}
&&a_{11}(a_{21} e_1 + a_{22} e_2 +a_{23} e_3 +a_{24} e_4)
+ \lambda_2 a_{22} (a_{11} e_1 + a_{12} e_2 +a_{13} e_3 +a_{14} e_4)\\
&&- (a_{11}a_{11}+a_{12}a_{21}+a_{13}a_{31}+a_{14}a_{41}) e_2
- \lambda_2(a_{11}(a_{11} e_1 + a_{12} e_2 +a_{13} e_3 +a_{14} e_4)\\
&&+ a_{12} (a_{21} e_1 + a_{22} e_2 +a_{23} e_3 +a_{24} e_4)
+a_{13} (a_{31} e_1 + a_{32} e_2 +a_{33} e_3 +a_{34} e_4)\\
&&+a_{14}(a_{41} e_1 + a_{42} e_2 +a_{43} e_3 +a_{44} e_4))
+ \lambda_2 a_{12} (a_{21} e_1 + a_{22} e_2 +a_{23} e_3 +a_{24} e_4) \\
&&= 0,
\end{eqnarray*}
thus we obtain
\begin{eqnarray*}
\lambda_2a_{13} a_{31} + \lambda_2a_{14}a_{41} &= 0,\\
- a_{12}a_{21}-a_{13}a_{31}-a_{14}a_{41} - \lambda_2a_{13} a_{32} - \lambda_2a_{14} a_{42} + \lambda_2a_{22}a_{12} &= 0,\\
\lambda_2a_{22}a_{13} - \lambda_2a_{13}a_{33} - \lambda_2a_{14}a_{43} &= 0,\\
\lambda_2a_{22}a_{14} - \lambda_2a_{13}a_{34}  - \lambda_2a_{14}a_{44} &= 0.
\end{eqnarray*}
Substituting $a_{22} = \lambda_2 a_{12},$ $a_{41} = \lambda_2 a_{31},\, a_{42} = \lambda_2 a_{32},\, a_{43} = \lambda_2 a_{33},\, a_{44} = \lambda_2 a_{34}$ into the above formula, we get
\begin{eqnarray*}
\lambda_2a_{13} a_{31} + \lambda_2^{2}a_{14} a_{31} = \lambda_2 a_{31}(a_{13} + \lambda_2a_{14}) &=0,\\
- a_{12}a_{21}-a_{13}a_{31}-\lambda_2 a_{14} a_{31} - \lambda_2a_{13} a_{32} - \lambda_2^{2}a_{14} a_{32}  + \lambda_2^{2}a_{12}^{2} &= 0,\\
\lambda_2^{2}a_{12}a_{13} - \lambda_2a_{13}a_{33} - \lambda_2^{2}a_{14} a_{33} = \lambda_2^{2}a_{12}a_{13} - \lambda_2 a_{33}(a_{13} + \lambda_2a_{14}) &= 0,\\
a_{12}\lambda_2^{2}a_{14} - \lambda_2a_{13}a_{34}  - \lambda_2^{2}a_{14} a_{34} &= 0.
\end{eqnarray*}
Let $$a_{12} \neq 0, a_{31} \neq 0, a_{32} \neq 0, a_{33} \neq 0, a_{34} \neq 0,$$
Then we get
$$a_{13} + \lambda_2a_{14} = 0.$$
Therefore
$$a_{13} = 0, a_{14} = 0, a_{21} = \lambda_2^{2}a_{12}.$$

From $(T2)$
$$
\lambda ( {D( e_2 )} )D( e_1 ) + D( {[{D( e_1 ),e_2} ]} )
+ \lambda ( e_2)D( {D( e_1 )} ) - D( {D( {[ {e_1,e_2}]} )} ) + [ {D( e_1 ),D( e_2 )} ] = 0,$$
we have
\begin{eqnarray*}
&&\lambda_2 a_{22} (a_{11} e_1 + a_{12} e_2 +a_{13} e_3 +a_{14} e_4)
+ a_{11}(a_{21} e_1 + a_{22} e_2 +a_{23} e_3 +a_{24} e_4) \\
&&+ \lambda_2(a_{11}(a_{11} e_1 + a_{12} e_2 +a_{13} e_3 +a_{14} e_4)
+ a_{12} (a_{21} e_1 + a_{22} e_2 +a_{23} e_3 +a_{24} e_4)\\
&&+a_{13} (a_{31} e_1 + a_{32} e_2 +a_{33} e_3 +a_{34} e_4)
+a_{14}(a_{41} e_1 + a_{42} e_2 +a_{43} e_3 +a_{44} e_4))\\
&&- a_{21}(a_{11} e_1 + a_{12} e_2 +a_{13} e_3 +a_{14} e_4)
- a_{22} (a_{21} e_1 + a_{22} e_2 +a_{23} e_3 +a_{24} e_4) \\
&&-a_{23} (a_{31} e_1 + a_{32} e_2 +a_{33} e_3 +a_{34} e_4)
-a_{24}(a_{41} e_1 + a_{42} e_2 +a_{43} e_3 +a_{44} e_4)\\
&&+ a_{11}a_{22} e_2+a_{11}a_{23} e_3-a_{11}a_{24} e_4+ a_{12}a_{23} e_4 = 0 ,
\end{eqnarray*}
Thus we obtian
$$
\begin{aligned}
a_{23} a_{31} + \lambda_2 a_{24} a_{31} &= 0,\\
a_{23}a_{32} + \lambda_2 a_{24} a_{32}  &= 0,\\
-a_{23}a_{33} -\lambda_2 a_{24} a_{33} &= 0,\\
\lambda_2a_{12}a_{24} - \lambda_2a_{12}a_{24} -a_{23}a_{34} - \lambda_2 a_{24} a_{34} + a_{12}a_{23} &= 0 ,
\end{aligned}
$$
$$ a_{24}= -\frac{a_{23}}{\lambda_2}, a_{12}a_{23} = 0,$$
Therefore
$$ a_{23}= 0, a_{24}= 0.$$

To sum up, the matrix of $D$ under the base $e_1 ,e_2 ,e_3 ,e_4 $ is give by
\[
\mathop D\nolimits_{1} = \left( {{\begin{array}{*{20}c}
 0   & {a_{12} }   & 0   & 0    \\
 \lambda_2^{2}a_{12}   & \lambda_2 a_{12}   & 0   & 0   \\
 {a_{31} }   & {a_{32} }   & {a_{33} }
  & {a_{34} }   \\
 \lambda_2 a_{31}   & \lambda_2 a_{32}   & \lambda_2 a_{33}   & \lambda_2 a_{34}   \\
\end{array} }} \right).
\]

Case $II$: Let $$\lambda ( e_1 ) = \lambda ( e_2 ) = \lambda ( e_4 ) = 0, \,\lambda ( e_3 ) = \lambda_3.$$

Then substituting this  into the twisted derivation conditions we can obtain the following result.

From $(T5)$, this condition is trivial since both sides are zero.

From $(T3)$,
$$\lambda ( {D( e_2 ]} ) ) = 0,$$
we get
$$a_{23}\lambda_3 = 0,$$
thus
$$a_{23} = 0.$$

From $(T1)$,
$$[ e_3 ,D( e_2 ) ] + \lambda ( e_4 )D( e_1 ) + [{\lambda ( e_3 )D( e_1 ),e_2} ]+[ {[ {D( e_1 ),e_2} ],e_3} - D(e_4) = 0,$$
we have
$$\lambda_3a_{11} e_2 + a_{11} e_4  = a_{41} e_1 + a_{42} e_2 +a_{43} e_3 +a_{44} e_4,$$
Thus we obtain
$$a_{41} = 0,\, a_{42} = \lambda_3a_{11},\, a_{43} = 0,\, a_{44} = a_{11}.$$

From $(T4)$,
\begin{eqnarray*}
&&D( {[ {D( e_1 ),e_2} ]} ) + \lambda ( {D( e_2 )} )D( e_1 ) - [ {D( {D( e_1 )} ),e_2} ] + \lambda ({D( e_1 )} )D( e_2 ) = 0,
\end{eqnarray*}
we get
\begin{eqnarray*}
&&a_{11}(a_{21} e_1 + a_{22} e_2 + a_{23} e_3 + a_{24} e_4)
+ \lambda_3 a_{23} (a_{11} e_1 + a_{12} e_2 + a_{13} e_3 + a_{14} e_4)\\
&&- (a_{11}a_{11}+a_{12}a_{21}+a_{13}a_{31}+a_{14}a_{41}) e_2
+ \lambda_3 a_{13} (a_{21} e_1 + a_{22} e_2 +a_{23} e_3 +a_{24} e_4) \\
&&= 0,
\end{eqnarray*}
Thus we obtain
\begin{eqnarray*}
(a_{11} + \lambda_3 a_{13})a_{21} &= 0,\\
(a_{11}+ \lambda_3 a_{13})a_{22} - a_{11}^{2} - a_{12}a_{21} - a_{13}a_{31} &= 0,\\
(a_{11} + \lambda_3 a_{13})a_{24} &= 0,
\end{eqnarray*}

From $(T2)$,
$$
\lambda ( {D( e_2 )} )D( e_1 ) + D( {[{D( e_1 ),e_2} ]} )
 - D( {D( {[ {e_1,e_2}]} )} ) + [ {D( e_1 ),D( e_2 )} ] = 0,$$
we have
\begin{eqnarray*}
&&\lambda_3 a_{23} (a_{11} e_1 + a_{12} e_2 +a_{13} e_3 +a_{14} e_4)
+ a_{11}(a_{21} e_1 + a_{22} e_2 +a_{23} e_3 +a_{24} e_4) \\
&&- a_{21}(a_{11} e_1 + a_{12} e_2 +a_{13} e_3 +a_{14} e_4)
- a_{22} (a_{21} e_1 + a_{22} e_2 +a_{23} e_3 +a_{24} e_4) \\
&&-a_{24}(a_{41} e_1 + a_{42} e_2 +a_{43} e_3 +a_{44} e_4)
+ a_{11}a_{22} e_2+a_{11}a_{23} e_3-a_{11}a_{24} e_4 \\
&&+ a_{12}a_{23} e_4 = 0 ,
\end{eqnarray*}
Thus we obtain
$$
\begin{aligned}
a_{22} a_{21} &= 0,\\
2a_{11}a_{22} - a_{21} a_{12} - a_{22} a_{22} - a_{24} a_{42}  &= 0,\\
a_{21}a_{13}  &= 0,\\
a_{21}a_{14} + a_{22} a_{24} +a_{24} a_{44} &= 0.
\end{aligned}
$$

Let $a_{11} \neq 0,$
then we have $$a_{13} = -\frac{a_{11}}{\lambda_3}, a_{21} = a_{24} = 0, a_{22} = 2a_{11}, a_{31} = \lambda_3 a_{11},$$
or $$a_{13} = -\frac{a_{11}}{\lambda_3}, a_{21} = 0, a_{22} = -a_{11}, a_{31} = -\frac{3a_{11}}{\lambda_3},$$
or $$a_{21} = a_{24} = 0, a_{22} = 0, a_{31} = -\frac{a_{11}^{2}}{a_{13}},$$
or $$a_{21} = a_{24} = 0, a_{22} = 2a_{11}, a_{31} = \frac{(a_{11} + 2\lambda_3 a_{13})a_{11}}{a_{13}}.$$

To sum up, the matrix of $D$ under the base $e_1 ,e_2 ,e_3 ,e_4 $ is given by
\[
\mathop D\nolimits_{21} = \left( {{\begin{array}{*{20}c}
 a_{11}   & {a_{12} }   & -\frac{a_{11}}{\lambda_3}   & a_{14}    \\
 0   & 2a_{11}   & 0   & 0   \\
 {a_{31} }   & {a_{32} }   & {a_{33} }
  & {a_{34} }   \\
 0   & \lambda_3 a_{11}   & 0   & a_{11}   \\
\end{array} }} \right),
\]
\[
\mathop D\nolimits_{22} = \left( {{\begin{array}{*{20}c}
a_{11}   & {a_{12} }   & -\frac{a_{11}}{\lambda_3}   & a_{14}    \\
 0   & -a_{11}   & 0   & a_{24}   \\
 -\frac{3a_{11}}{\lambda_3}   & {a_{32} }   & {a_{33} }
  & {a_{34} }   \\
 0   & \lambda_3 a_{11}   & 0   & a_{11}   \\
\end{array} }} \right),
\]
\[
\mathop D\nolimits_{23} = \left( {{\begin{array}{*{20}c}
 a_{11}   & {a_{12} }   & a_{13}   & a_{14}    \\
 0   & 0   & 0   & 0   \\
 -\frac{a_{11}^{2}}{a_{13}}   & {a_{32} }   & {a_{33} }
  & {a_{34} }   \\
 0   & \lambda_3 a_{11}   & 0   & a_{11}   \\
\end{array} }} \right),
\]
\[
\mathop D\nolimits_{24} = \left( {{\begin{array}{*{20}c}
 a_{11}   & {a_{12} }   & a_{13}   & a_{14}    \\
 0   & 2a_{11}   & 0   & 0   \\
 \frac{(a_{11} + 2\lambda_3 a_{13})a_{11}}{a_{13}}   & {a_{32} }   & {a_{33} }
  & {a_{34} }   \\
 0   & \lambda_3 a_{11}   & 0   & a_{11}   \\
\end{array} }} \right).
\]

Case $III$: Let $$\lambda ( e_1 ) = \lambda ( e_2 ) = \lambda ( e_3 ) = 0, \,\lambda ( e_4 ) = \lambda_4 \neq 0,$$
then substituting this into the twisted derivation condition we can obtain the following result.

From $(T3)$,  we obtian
$$a_{24} = 0.$$

From $(T1)$,
$$ \lambda ( e_4 )D( e_1 ) + [ [ D( e_1 ),e_2 ],e_3] - D( e_4 ) = 0,$$
we have
$$\lambda_4(a_{11} e_1 + a_{12} e_2 +a_{13} e_3 +a_{14} e_4) + a_{11}e_4= a_{41} e_1 + a_{42} e_2 +a_{43} e_3 +a_{44} e_4,$$
Thus we obtain
$$a_{41} = \lambda_4 a_{11},\, a_{42} = \lambda_4 a_{12},\, a_{43} = \lambda_4 a_{13},\, a_{44} = a_{11} + \lambda_4 a_{14}.$$

From $(T4)$,
\begin{eqnarray*}
&&D( {[ {D( e_1 ),e_2} ]} ) + \lambda ( {D( e_2 )} )D( e_1 ) - [ {D( {D( e_1 )} ),e_2} ] + \lambda ({D( e_1 )} )D( e_2 ) = 0,
\end{eqnarray*}
we have
\begin{eqnarray*}
&&a_{11}(a_{21} e_1 + a_{22} e_2 +a_{23} e_3 +a_{24} e_4)
+ \lambda_4 a_{24} (a_{11} e_1 + a_{12} e_2 +a_{13} e_3 +a_{14} e_4)\\
&&- (a_{11}a_{11}+a_{12}a_{21}+a_{13}a_{31}+a_{14}a_{41}) e_2
+ \lambda_4 a_{14} (a_{21} e_1 + a_{22} e_2 +a_{23} e_3 +a_{24} e_4) \\
&&= 0,
\end{eqnarray*}
thus we obtain
\begin{eqnarray*}
(a_{11}+ \lambda_4 a_{14})a_{21} &= 0,\\
(a_{11}+ \lambda_4 a_{14})a_{22} - a_{11}^{2} - a_{12}a_{21} - a_{13}a_{31} - a_{14}\lambda_4 a_{11} &= 0,\\
(a_{11}+ \lambda_4 a_{14})a_{23} &= 0.
\end{eqnarray*}

Let
$$a_{11} \neq 0, a_{12} \neq 0, a_{13} \neq 0, a_{14} \neq 0,$$
Then we obtain
$$a_{21} = a_{23} = 0, (a_{11}+ \lambda_4 a_{14})a_{22} - a_{11}^{2} - a_{13}a_{31} - a_{14}\lambda_4 a_{11} = 0.$$

From $(T2)$,
$$
\lambda ( {D( e_2 )} )D( e_1 ) + D( {[{D( e_1 ),e_2} ]} ) - D( {D( {[ {e_1,e_2}]} )} ) + [ {D( e_1 ),D( e_2 )} ] = 0,$$
we have
\begin{eqnarray*}
&&\lambda_4 a_{24} (a_{11} e_1 + a_{12} e_2 +a_{13} e_3 +a_{14} e_4)
+ a_{11}(a_{21} e_1 + a_{22} e_2 +a_{23} e_3 +a_{24} e_4) \\
&&- a_{21}(a_{11} e_1 + a_{12} e_2 +a_{13} e_3 +a_{14} e_4)
- a_{22} (a_{21} e_1 + a_{22} e_2 +a_{23} e_3 +a_{24} e_4) \\
&&-a_{23} (a_{31} e_1 + a_{32} e_2 +a_{33} e_3 +a_{34} e_4)
-a_{24}(a_{41} e_1 + a_{42} e_2 +a_{43} e_3 +a_{44} e_4)\\
&&+ a_{11}a_{22} e_2+a_{11}a_{23} e_3-a_{11}a_{24} e_4+ a_{12}a_{23} e_4 = 0 ,
\end{eqnarray*}
thus we obtain
$$
\begin{aligned}
a_{22} a_{21} + a_{23} a_{31} &= 0,\\
2a_{11}a_{22} - a_{21}a_{12} - a_{22}^{2} -a_{23} a_{32} &= 0,\\
2a_{11}a_{23} - a_{21}a_{13} - a_{22}a_{23} -a_{23}a_{33}  &= 0,\\
- a_{21}a_{14} - a_{23}a_{34} + a_{12}a_{23} &= 0.
\end{aligned}
$$
Therefore we the following
$$a_{22} = 0, a_{31} = -\frac{a_{11}^{2} + \lambda_4 a_{14} a_{11}}{a_{13}},$$
or $$a_{22} = 2a_{11}, a_{31} = \frac{a_{11}^{2} + \lambda_4 a_{14} a_{11}}{a_{13}}.$$

To sum up, the matrix of $D$ under the base $e_1 ,e_2 ,e_3 ,e_4 $ is
\[
\mathop D\nolimits_{31} = \left( {{\begin{array}{*{20}c}
 {a_{11} }   & {a_{12} }   & {a_{13} }   & {a_{14} }    \\
 0   & 0   & 0   & 0   \\
 -\frac{a_{11}^{2} + \lambda_4 a_{14} a_{11}}{a_{13}}   & {a_{32} }   & {a_{33} }
  & {a_{34} }   \\
 \lambda_4 a_{11}   & \lambda_4 a_{12}   & \lambda_4 a_{13}   & \lambda_4 a_{14} + a_{11}   \\
\end{array} }} \right)
\]
or
\[
\mathop D\nolimits_{32} = \left( {{\begin{array}{*{20}c}
 {a_{11}}   & {a_{12} }   & {a_{13} }   & {a_{14} }    \\
 0   & 2a_{11}   & 0   & 0   \\
 \frac{a_{11}^{2} + \lambda_4 a_{14} a_{11}}{a_{13}}   & {a_{32} }   & {a_{33} }
  & {a_{34} }   \\
 \lambda_4 a_{11}   & \lambda_4 a_{12}   & \lambda_4 a_{13}   & \lambda_4 a_{14} + a_{11}   \\
\end{array} }} \right).
\]

\section*{Acknowledgments}
This is a primary edition. Something should be modified in the future.

\vskip7pt

\footnotesize{
  \noindent % Addresses:
 College of Mathematics, Henan Normal University, Xinxiang 453007, P. R. China;\\
E-mail address:\texttt{{
 zhangtao@htu.edu.cn}}\vskip5pt

\noindent % Addresses:
 College of Mathematics, Henan Normal University, Xinxiang 453007, P. R. China;\\
 E-mail address:\texttt{{
zhanglingny@163.com}}.\vskip5pt

 \footnotesize{\noindent % Addresses:
 College of Mathematics, Henan Normal University, Xinxiang 453007, P. R. China;\\
 E-mail address:\texttt{{
  xieruyi98@163.com}}.\vskip5pt
}


\begin{thebibliography}{99}


\bibitem{AM1}
A. L. Agore, G. Militaru,  \emph{Extending structures I: the level of groups}, {Algebr. Represent. Theory} {17} (2014), 831--848.

\bibitem{AM2}
A. L. Agore, G. Militaru, \emph{Extending structures II: the quantum version},  J. Algebra {336} (2011), 321--341.

%\bibitem{pierce}
%A.L. Agore, G. Militaru, \emph{Unified products and split extensions of Hopf algebras}, Contemporary Math. AMS 585 (2013), 1--15.

\bibitem{AM3}
A.L. Agore, G. Militaru, \emph{Extending structures for Lie algebras}, Monatsh. fur Mathematik 174(2014), 169--193.

\bibitem{AM4}
A. L. Agore, G. Militaru, \emph{Unified products for Leibniz algebras. Applications}, Linear Algebra Appl. {439} (2013), 2609--2633.


\bibitem{AM5}
A.L. Agore, G. Militaru, \emph{The global extension problem, crossed uroducts and co-flag noncommutative Poisson algebras}, J. Algebra 426(2015), 1--31.

\bibitem{AM6}
A. L. Agore, G. Militaru, \emph{Extending structures, Galois groups and supersolvable associative algebras}, Monatsh. Math. {181} (2016), 1--33.


 \bibitem{Hong1}
Y. Hong, Extending structures and classifying complements for left-symmetric algebras, Results Math.74(2019), 32. arXiv:1511.08571.

\bibitem{Hong2}
Y. Hong, Extending structures for associative conformal algebras, Linear Multilinear Algebra 67(2019), 196--212. arXiv:1705.02827.

\bibitem{Hong3}
Y. Hong and Y. Su, Extending structures for Lie conformal algebras, Algebr. Represent. Theor. {20} (2017), 209-230


\bibitem{Fi}
V.T. Filippov, \emph{Mal'tsev algebras}, Algebra and Logic 16(1)(1977), 70--74.

\bibitem{EM}
A. Elduque and H. C. Myung, \emph{Mutations of alternative algebras}, Kluwer Academic Publishers, Boston, 1994.



\bibitem{GM}
M. Gunaydin and D. Minic. \emph{Nonassociativity, Malcev algebras and string theory}, Fortschr. Phys. { 61}(10)(2013), 873--892.



\bibitem{Ku}
E.N. Kuz'min, \emph{Mal'tsev algebras and their representations}, Algebra and Logic { 7} (4) (1968), 233--244.

\bibitem{Ma}
 A. Malcev, \emph{Analytic loops}, Mat. Sb. { 78}(1955), 569--578.



\bibitem{Sa}
A. A. Sagle, \emph{Malcev algebras}, Trans. Amer. Math. Soc. { 101}(1961), 426--458.

\bibitem{Ya0}
K. Yamaguti, \emph{Note on Malcev algebras}, Kumamoto J. Sci., Ser. A { 5} (1962), 203--207.

\bibitem{Ya}
K. Yamaguti, \emph{On the theory of Malcev algebras}, Kumamoto J. Sci. Ser. A { 6} (1963), 9--45.

\bibitem{Ya1}
K. Yamaguti, \emph{On the cohomology space of Lie triple systems}, Kumamoto J. Sci. A { 5} (1960), 44--52.



\end{thebibliography}
\end{document}